\documentclass{amsart}
\usepackage{times,mathrsfs,array,amssymb,pb-diagram,multirow}

\newcounter{lemma}
\newtheorem{Theorem}{Theorem}
\newtheorem{Lemma}[lemma]{Lemma}
\newtheorem{Corollary}[lemma]{Corollary}

\theoremstyle{definition}

\newtheorem{Remark}[lemma]{Remark}

\def\Gal{\mathrm{Gal}}

\def\tor{\mathrm{tor}}

\def\C{\mathbb C}
\def\Q{\mathbb Q}

\def\Z{\mathbb Z}
\def\sC{\mathscr C}
\def\sU{\mathscr U}
\def\mod{\  \mathrm{mod}\ }

\def\JS#1#2{\left(\frac{#1}{#2}\right)}

\def\SL{\mathrm{SL}}
\def\wt{\widetilde}
\def\M#1#2#3#4{\begin{pmatrix}#1&#2\\#3&#4\end{pmatrix}}
\def\SM#1#2#3#4{\left(\begin{smallmatrix}#1&#2\\#3&#4\end{smallmatrix}
  \right)}
\def\div{\mathrm{div}\,}
\def\Div{\mathrm{Div}}

\def\->{\rightarrow}
\def\<->{\leftrightarrow}

\def\~{\widetilde}
\newcommand{\tabcaption}{\def\@captype{table}\caption}

\begin{document}
\title[Modular units and cuspidal divisor classes on $X_0(N)$]{Modular
  units and cuspidal divisor classes on $X_0(n^2M)$ with $n|24$ and
  $M$ squarefree}

\author{Liquan Wang}
\address{School of Mathematics and Statistics, Wuhan University, Wuhan
  430072, People's Republic of China}
\email{wanglq@whu.edu.cn}

\author{Yifan Yang}
\address{Department of Mathematics, 
National Taiwan University and National Center for Theoretical
Sciences, Taipei, Taiwan 10617}
\email{yangyifan@ntu.edu.tw}

\begin{abstract} For a positive integer $N$, let $\mathscr C(N)$ be
  the subgroup of $J_0(N)$ generated by the equivalence classes of
  cuspidal divisors of degree $0$ and $\mathscr C(N)(\mathbb
  Q):=\mathscr C(N)\cap J_0(N)(\mathbb Q)$ be its $\mathbb Q$-rational
  subgroup. Let also
  $\mathscr C_{\mathbb Q}(N)$ be the subgroup of $\mathscr
  C(N)(\mathbb Q)$ generated by $\mathbb Q$-rational
  cuspidal divisors. We prove that when $N=n^2M$ for some integer $n$
  dividing $24$ and some squarefree integer $M$, the two groups
  $\mathscr C(N)(\mathbb Q)$ and $\mathscr C_{\mathbb Q}(N)$ are
  equal. To achieve this, we show that all modular units on $X_0(N)$
  on such $N$ are products of functions of the form $\eta(m\tau+k/h)$,
  $mh^2|N$ and $k\in\mathbb Z$ and determine the necessary and
  sufficient conditions for products of such functions to be modular
  units on $X_0(N)$.
\end{abstract}

\thanks{This work was motivated by a remark about the equality between
  $\sC_\Q(N)$ and $\sC(N)(\Q)$ made by Hwajong Yoo in his talk given
  at Workshop on Eisenstein Ideals and Iwasawa Theory, Beijing, June
  17--22, 2019. The second author would like to thank the organizers,
  Emmanuel Lecouturier in particular, for inviting him to this
  wonderful workshop. He enjoyed discussions with the participants of
  the workshop, including Yuan Ren, Ken Ribet, Takao Yamazaki, and
  Hwajong Yoo.
  The authors would also like to thank the anonymous referees for many
valuable comments that greatly improve the exposition of the paper.}

\keywords{modular units, Jacobian of a modular curve, Dedekind eta function}
\subjclass[2000]{Primary 11G16; secondary 11F03, 11G18, 14G05}

\maketitle

\section{Introduction}
Let $N$ be a positive integer. In this note, we are primarily
concerned with modular units on the modular curve $X_0(N)$, i.e.,
modular functions on $X_0(N)$ whose divisors are supported on cusps,
and the cuspidal subgroup of the Jacobian variety $J_0(N)$ of
$X_0(N)$.

To describe relavent results in literature, we recall that a divisor
$D\in\Div(X_0(N))$ is said to be \emph{cuspidal} if its support lies
on cusps of $X_0(N)$. We let $\sC(N)$ be the subgroup of $J_0(N)$
generated by the equivalence classes of cuspidal divisors of
degree $0$ on $X_0(N)$, and 
refer to it as the \emph{cuspidal subgroup} of $J_0(N)$. By a
well-known result of Manin and Drinfeld \cite{Manin}, $\sC(N)$ is
contained in the torsion subgroup $J_0(N)_\tor$ of $J_0(N)$. Let also
$$
\sC(N)(\Q):=\sC(N)\cap J_0(N)(\Q)
$$
and $\sC_\Q(N)$ be the subgroup of $\sC(N)$ generated by
$\Q$-rational cuspidal divisors of degree $0$ on $X_0(N)$. (Here
we say a cuspidal divisor $D$ is $\Q$-rational if $\sigma(D)=D$ for
all $\sigma\in\Gal(\overline\Q/\Q)$.) Since the study of $\sC(N)$ is
equivalent to the study of modular units, we introduce the following
two groups:
$$
\sU(N):=\{\text{modular units on }X_0(N)\}/\C^\times
$$
and
$$
\sU_\Q(N):=\{f\in\sU(N):~\div f\text{ is }\Q
\text{-rational}\}/\C^\times.
$$

Now we have the inclusions of three groups
\begin{equation} \label{equation: inclusions}
\sC_\Q(N)\subseteq\sC(N)(\Q)\subseteq J_0(N)(\Q)_\tor.
\end{equation}
When the level $N$ is $2^rM$ for some odd squarefree integer $M$ and
some nonnegative integer $r\le3$, every cusp of $X_0(N)$ is
$\Q$-rational and hence $\sC_\Q(N)=\sC(N)(\Q)$. However, as pointed
out by Ken Ribet and other mathematicians, it is not clear a priori
whether $\sC_\Q(N)$ and $\sC(N)(\Q)$ are equal in general. It could
happen that even though $D$ itself is not a $\Q$-rational cuspidal
divisor, one still has $\sigma(D)\sim D$ for all
$\sigma\in\Gal(\overline\Q/\Q)$ so that $D\in\sC(N)(\Q)$. For example,
for $N=25$, the cusps $a/5$, $a=1,\ldots4$, are defined over
$\Q(e^{2\pi i/5})$ and they are Galois conjugates of each other, but
since $X_0(25)$ has genus $0$ and $J_0(25)$ is trivial, any cuspidal
divisor class $(a/5)-(b/5)$ is a $\Q$-rational point of the (trivial)
Jacobian. In fact, it took quite an effort in \cite[Pages
1268--1273]{Takagi-J2p} to prove that in the case of $X_1(2p)$, $p$ a
prime, two analogously defined groups are indeed equal.

For the second inclusion in \eqref{equation: inclusions}, Ogg
\cite{Ogg-survey} conjectured and later Mazur \cite{Mazur} proved that
in the case $N=p$ is a prime, one has $J_0(p)(\Q)_\tor=\sC_\Q(p)$ and
the group is cyclic of order $(p-1)/(p-1,12)$ generated by the class
of $(0)-(\infty)$. Since then, many mathematicians have tried to
extend Mazur's theorem to general cases. Here we list some known
results in literature about $\sC_\Q(N)$ and $J_0(N)(\Q)_\tor$.
\begin{enumerate}
\item[(a)] Lorenzini \cite{Lorenzini} showed that when $N=p^n$ is a
  prime power with $p\ge 5$ and $p\not\equiv 11\mod 12$, one has
  $$
  \sC_\Q(p^n)\otimes\Z[1/2p]\simeq J_0(p^n)(\Q)_\tor
  \otimes\Z[1/2p].
  $$
\item[(b)] Assume that $N=p^n$ is a prime power with $p\ge 5$. Ling
  \cite{Ling} computed the cardinality and the structure of
  $\sC_\Q(N)$ and proved that
  $$
  \sC_\Q(p^n)\otimes\Z[1/6p]\simeq J_0(p^n)(\Q)_\tor
  \otimes\Z[1/6p].
  $$
  One key property used in the proof is the fact that all modular
  units in $\sU_\Q(N)$ are products of the Dedekind eta functions.
  (This follows from either \cite[Theorem 1]{Newman2} or
  \cite[Proposition 3.2.1]{Ligozat}.)
  Later on, Yamazaki and Yang \cite{Yamazaki-Yang} obtained a basis
  for $\sU_\Q(p^n)$, $p\ge 5$, using Ling's cuspidal class number
  formula.
\item[(c)] Assume that $N$ is squarefree. Takagi \cite{Takagi} also
  used the fact that all modular units on $X_0(N)$ are products of
  the Dedekind eta functions to compute the cuspidal class number and
  described the structure of $\sC(N)(=\sC_\Q(N))$. Note that the
  special case where $N$ is a product of two primes was treated
  earlier in \cite{Chua-Ling}.
\item[(d)] Again, assume that $N$ is squarefree. Ohta \cite{Ohta}
  showed that
  $$
  \sC(N)\otimes\Z[1/6]\simeq J_0(N)(\Q)_\tor\otimes\Z[1/6],
  $$
  and in addition, if $3\nmid N$, then
  \begin{equation} \label{equation: Ohta 2}
  \sC(N)\otimes\Z[1/2]\simeq J_0(N)(\Q)_\tor\otimes\Z[1/2].
  \end{equation}
  In \cite{Yoo}, Yoo showed that if $p$ is
  a prime greater than $3$ such that either $p\not\equiv 1\mod 9$ or
  $3^{(p-1)/3}\not\equiv 1\mod p$, then \eqref{equation: Ohta 2} also
  holds for $N=3p$.
\item[(e)] Ren \cite{Ren} proved that for any positive integer $N$,
  $$
  J_0(N)(\Q)_\tor\otimes\Z[1/N']\simeq 0,
  $$
  where $N'=6N\prod_{p|N}(p^2-1)$. That is, for a prime $p'$, the
  $p'$-primary part of $J_0(N)(\Q)_\tor$ is trivial unless $p'$
  divides $N'$.
\item[(f)] In a very recent preprint \cite{Yoo2}, Yoo completely
  determined the structure of $\sC_\Q(N)$ for all $N$.
\end{enumerate}
Note that the cuspidal divisor subgroups of $J_1(N)$, the Jacobian of
$X_1(N)$, have also been studied by many authors. See, for instance,
\cite{Hazama,Sun,Takagi-X1p,Takagi-X1pm,Takagi-X12p,Takagi-J2p,Yang2,Yu}.

In this note, we will consider the case where $N$ is of the form
$N=n^2M$ for some integer $n$ dividing $24$ and squarefree $M$ ($2|M$
and $3|M$ permitted). The primary reason for considering such levels
is that modular units in these cases can still be expressed in terms
of the Dedekind eta functions. The key observation is that if $h|24$,
then $\eta(m\tau+k/h)$ is modular on $\Gamma_0(h^2m)$ in
the sense that
$$
\eta(m\gamma\tau+k/h)=\epsilon\sqrt{\frac{c\tau+d}i}
\eta(m\tau+k/h)
$$
for all $\gamma=\SM abcd\in\Gamma_0(h^2m)$ for some root of unity
$\epsilon$ depending on $\gamma$ (see Lemma \ref{lemma: eta mk}
below). Our approaches and results rely crucially on this observation
and cannot be extended to the cases $n\nmid 24$.

Throughout the remainder of the paper, we assume that $N=n^2M$ with
$n|24$ and $M$ squarefree. For a positive divisor $m$ of $N$, let
$h=h(m)$ be the largest integer such that $mh^2|N$, and for an integer
$k$, we define
$$
\eta_{m,k}(\tau):=\eta(m\tau+k/h)
=e^{2\pi i(m\tau+k/h)/24}\prod_{\ell=1}^\infty
\left(1-e^{2\pi i\ell k/h}q^{m\ell}\right), \quad q=e^{2\pi i\tau}.
$$
Our first main result gives the necessary and sufficient conditions
for a product of $\eta_{m,k}$ to be a modular function on $X_0(N)$.
The conditions are reminiscent of a well-known
criterion (see \cite[Proposition 3.2.8]{Ligozat}) for
$\prod_{d|N}\eta(d\tau)^{e_d}$ to be a modular function on $X_0(N)$.
Note that since $\eta_{m,k}$ and $\eta_{m,k+h}$ differ only by a root
of unity, we may assume that $k$ is in the range $0\le k\le h(m)-1$.

\begin{Theorem} \label{theorem: modular units}
  Let $N=n^2M$ with $n|24$ and $M$ squarefree and let $\eta_{m,k}$ and
  $h(m)$ be defined as above. Then a product of the form
  \begin{equation} \label{equation: product of eta mk}
    \prod_{m|N}\prod_{k=0}^{h(m)-1}\eta_{m,k}^{e_{m,k}}, \quad
    e_{m,k}\in\Z,
  \end{equation}
  is a modular function on $X_0(N)$ if and only if the
  integers $e_{m,k}$ satisfy the following conditions:
  \begin{enumerate}
  \item[(a)] $\displaystyle\sum_{m,k}e_{m,k}=0$,
  \item[(b)] $\displaystyle\sum_{m,k}e_{m,k}m\equiv 0\mod 24$,
  \item[(c)]
    $\displaystyle\sum_{m,k}e_{m,k}\frac{N(h(m),k)^2}{mh(m)^2}\equiv
    0\mod 24$,
  \item[(d)]
    \begin{enumerate}
    \item[(i)] In the case $n=3$ is odd,
    $$
    \sum_{m,k}e_{m,k}k\equiv 0\mod 3
    $$
    and
    \begin{equation} \label{equation: rational square}
    \prod_{m,k}m^{e_{m,k}}
    \end{equation}
    is the square of a rational number.
    \item[(ii)] In the case $n$ is even,
      \begin{equation} \label{equation: mod 2}
      \sum_{m,k}e_{m,k}\left(\frac{kn}{h(m)}
        +\frac n2\mathrm{ord}_2(m)\right)\equiv 0\mod n
      \end{equation}
      and the odd part of \eqref{equation: rational square} is the
      square of a rational number, where $\mathrm{ord}_2(m)$ denotes
      the $2$-adic valuation of $m$.
    \end{enumerate}
  \end{enumerate}
  Here the summation $\sum_{m,k}$ and the product $\prod_{m,k}$ are
  understood to be over pairs $(m,k)$ of integers with $m|N$ and $0\le
  k\le h(m)-1$.
\end{Theorem}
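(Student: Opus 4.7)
The plan is to compute the transformation of $f(\tau)=\prod_{m,k}\eta_{m,k}(\tau)^{e_{m,k}}$ under an arbitrary $\gamma=\SM abcd\in\Gamma_0(N)$, then extract each of (a)--(d) as the obstruction for the resulting factor to be trivial. Since $h(m)\mid n\mid 24$ and $m h(m)^2\mid N$, we have $\Gamma_0(N)\subseteq\Gamma_0(m h(m)^2)$, so the lemma cited in the introduction yields
$$
\eta_{m,k}(\gamma\tau)=v_{m,k}(\gamma)\sqrt{\frac{c\tau+d}{i}}\,\eta_{m,k'(\gamma,k)}(\tau),
$$
where $k'(\gamma,k)\equiv ak\pmod{h(m)}$ and $v_{m,k}(\gamma)$ is an explicit root of unity inherited from the Dedekind eta multiplier on $\SL_2(\Z)$.

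Condition (a) is forced by requiring weight zero (each $\eta_{m,k}$ has weight $1/2$); once (a) holds, the automorphy factor $\sqrt{(c\tau+d)/i}$ cancels after taking the product. Condition (b) is obtained by specialising to $\gamma=T=\SM 1101$ and using the elementary shift $\eta_{m,k}(\tau+1)=e^{\pi im/12}\eta_{m,k}(\tau)$: triviality of $\exp(\pi i\sum_{m,k}m e_{m,k}/12)$ is exactly $\sum m e_{m,k}\equiv 0\pmod{24}$. For (c) one conjugates by the Fricke involution $W_N$. Writing $h_1=h/(h,k)$ and $k=(h,k)k_1$, the matrix $\SM{k_1 N}{-m h_1}{h_1 N}{0}$ of determinant $m h_1^2 N$ can be left-multiplied by an element of $\SL_2(\Z)$ to upper-triangular form, showing that $(\eta_{m,k}\mid W_N)(\tau)$ equals, up to the standard universal factor, a constant multiple of $\eta_{m^*,k^*}(\tau)$ with $m^*=N(h(m),k)^2/(m h(m)^2)$; imposing condition (b) on the conjugated product then produces (c). Thus (b) and (c) encode integrality of the orders at the two distinguished cusps $\infty$ and $0$, directly generalising Ligozat's classical criterion for $\prod_{d\mid N}\eta(d\tau)^{e_d}$.

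Once (a)--(c) hold, the only remaining obstruction is the character
$$
\chi(\gamma)=\prod_{m,k}v_{m,k}(\gamma)^{e_{m,k}}
$$
on $\Gamma_0(N)$. Using the explicit form of $v_{m,k}$, which combines a Kronecker symbol (a quadratic character in $d$) and a primitive $24$-th root of unity (a mod-$24$ character), $\chi$ factors through a small quotient of the abelianisation of $\Gamma_0(N)$. Evaluating $\chi$ on a well-chosen finite generating set, one finds that $\chi\equiv 1$ iff the following hold. In the case $n=3$ the cubic root of unity is killed by $\sum e_{m,k}k\equiv 0\pmod 3$ and the quadratic Kronecker part by the rational-square condition on $\prod m^{e_{m,k}}$. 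In the case $n$ even the congruence \eqref{equation: mod 2} simultaneously handles the $2$-adic roots of unity from the $k$-permutation and the $\mathrm{ord}_2(m)$-contribution in the eta multiplier, while the odd part of $\prod m^{e_{m,k}}$ being a rational square handles the remaining odd quadratic character; the $2$-part of $\prod m^{e_{m,k}}$ is already absorbed by \eqref{equation: mod 2}, which is why no evenness-of-$2$-part condition appears.

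The main obstacle will be the precise bookkeeping of the eta multiplier needed to isolate $v_{m,k}$ explicitly, and verifying that (d)(i) and (d)(ii) are sufficient as well as necessary. Concretely, one must exhibit enough elements in $\Gamma_0(N)$ whose images generate the residual character group, evaluate $\chi$ on them via quadratic reciprocity and Kronecker-symbol identities modulo $24$, and rule out any hidden residual character. The dichotomy between $n=3$ odd and $n$ even reflects the factorisation $24=8\cdot 3$ in the multiplier of $\eta$ on $\SL_2(\Z)$, and this is where the bulk of the technical computation lies.
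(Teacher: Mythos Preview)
Your outline has the right architecture --- weight gives (a), order at $\infty$ gives (b), order at $0$ gives (c), and the residual multiplier gives (d) --- and this matches the paper. But two points need correction.

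First, your transformation law $\eta_{m,k}(\gamma\tau)=v_{m,k}(\gamma)\sqrt{(c\tau+d)/i}\,\eta_{m,k'}(\tau)$ with $k'\equiv ak\pmod{h(m)}$ is wrong, and the later reference to a ``$k$-permutation'' compounds the error. The whole point of the hypothesis $h\mid 24$ is that it forces $a\equiv d\pmod h$ (since $ad\equiv 1\pmod h$ and $(\Z/h\Z)^\times$ has exponent $2$), so that $\sigma\gamma\sigma^{-1}\in\SL_2(\Z)$ with $\sigma=\SM{mh}{k}{0}{h}$, and hence $\eta_{m,k}(\gamma\tau)=\epsilon\sqrt{(c\tau+d)/i}\,\eta_{m,k}(\tau)$ with the \emph{same} $k$. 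There is no permutation; each $\eta_{m,k}$ is already an eigenvector for all of $\Gamma_0(N)$. If there were a genuine $k$-permutation, $\chi$ as you define it would not even be a character.

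Second, and more seriously, your treatment of (d) is only a plan, not a proof, and it is missing the idea that makes the computation tractable. You propose to evaluate $\chi$ on ``a well-chosen finite generating set'' and then invoke reciprocity identities, but you never say which generators or how the evaluation goes. The paper's device is different and much cleaner: once (a)--(c) hold, one shows the multiplier depends only on the right coset of $\gamma$ modulo the subgroup generated by $\SM1101$ and $\SM10N1$, and then proves that every such coset contains a representative with $24N\mid c$. For such $\gamma$ the $\epsilon$-factor from the eta transformation collapses dramatically (all the $c/m$, $kc/hm$, $k^2c/h^2m$ terms vanish mod $24$), leaving only a Jacobi symbol $\JS{cm}{d}$ and a phase $e^{2\pi i\,d(d-a)k/(24h)}$. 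Writing $d-a=24d'$ and varying $\gamma$ then isolates exactly the conditions in (d)(i) and (d)(ii), including the $\mathrm{ord}_2(m)$ term, which enters via the identity $(-1)^{d'}=\JS2d$ when $32\mid c$. Without this reduction to $24N\mid c$, the raw $\epsilon$-formula is too messy to read off (d) directly, and your sketch gives no alternative mechanism.

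Your Fricke-involution route to (c) is a legitimate variant of the paper's direct order computation at $0$; either works. But for (d) you still owe the actual computation, and you should drop the $k$-permutation language entirely.
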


\begin{Remark} Note that the first three conditions represent the
  requirements that the weight is $0$, and the orders of the function
  at the cusps $\infty$ and $0$ are integers, respectively. See
  Corollary \ref{corollary: orders at infinity and 0} below.
\end{Remark}

Noticing that the number of such functions $\eta_{m,k}$ exceeds the
rank of $\sU(N)$, in the next theorem, we shall find a subset of such
functions so that every modular unit is uniquely expressed as a
product of functions from this subset.

\begin{Theorem} \label{theorem: modular units 2}
  Let $N=n^2M$ with $n|24$ and $M$ squarefree. Then
  every modular unit on $X_0(N)$ can be uniquely expressed as
  $$
  c\prod_{m|N}\prod_{k=0}^{\phi(h(m))-1}\eta_{m,k}^{e_{m,k}}
  $$
  for some nonzero complex numbers $c$ and integers $e_{m,k}$
  satisfying the conditions in Theorem \ref{theorem: modular units},
  where $\phi$ is Euler's totient function.
\end{Theorem}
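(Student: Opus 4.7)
The plan is to combine Theorem~\ref{theorem: modular units}, which already expresses every modular unit on $X_0(N)$ as $c\prod_{m,k}\eta_{m,k}^{a_{m,k}}$ over the full range $0\le k\le h(m)-1$, with a dimension count, a linear-independence argument, and classical $\eta$-product identities to reduce to the restricted range. The first step is the combinatorial identity $\sum_{m\mid N}\phi(h(m))=\#\{\text{cusps of }X_0(N)\}$, which can be verified by matching both sides against the standard cusp count $\sum_{c\mid N}\phi(\gcd(c,N/c))$ (e.g.\ via the pairing $c=mh(m)$, accounting for multiplicities). Once this is in place, the restricted index set has exactly as many elements as $X_0(N)$ has cusps.

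For \emph{uniqueness}, I would compute the order of each restricted $\eta_{m,k}$ at each cusp of $X_0(N)$ via the $\eta$ transformation formula, producing a $\#\text{cusps}\times\#\text{cusps}$ matrix. Ordering the cusps suitably (say, by denominator $c$, with secondary ordering by the second coordinate $a$) should put this matrix in block-triangular form, and explicit evaluation of the diagonal blocks should show its determinant is nonzero. Linear independence of the corresponding divisors follows, and with it the uniqueness of any expression of the claimed shape.

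For \emph{existence}, applied at $\tau\mapsto m\tau$, the classical identity
$$
\prod_{k=0}^{p-1}\eta\bigl(\tau+\tfrac{k}{p}\bigr)=\varepsilon_p\,\frac{\eta(p\tau)^{p+1}}{\eta(p^2\tau)},\qquad p\in\{2,3\},
$$
together with its shifts $\tau\mapsto\tau+j/p^r$, lets one rewrite each factor $\eta_{m,k}$ with $k\ge\phi(h(m))$ as an integer product of $\eta_{m',k'}$'s inside the restricted range. The crucial structural point is that whenever $mh(m)^2\mid N$, the levels $pm$ and $p^2m$ also divide $N$, so the reduction stays inside the set of $\eta_{m,k}$'s at our disposal, with roots of unity absorbed into the constant $c$. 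Combined with Theorem~\ref{theorem: modular units}, this shows that every modular unit admits at least one expression of the claimed form.

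The main obstacle is the bookkeeping in the existence step when $h(m)$ is composite (notably $h(m)\in\{6,8,12,24\}$): one must interleave the identities for $p=2$ and $p=3$, iterate them several times, and verify that the exponents remain integers and that no factor outside the restricted range survives. A secondary challenge is establishing the nonsingularity of the order matrix used in the uniqueness step; while the block-triangular structure should morally force this, actually identifying the right ordering of cusps and evaluating the diagonal blocks explicitly requires genuine computation with the $\eta$ transformation law.
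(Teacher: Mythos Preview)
Your proposal has a genuine gap in the existence step. You write that Theorem~\ref{theorem: modular units} ``already expresses every modular unit on $X_0(N)$ as $c\prod_{m,k}\eta_{m,k}^{a_{m,k}}$ over the full range,'' but it does not: Theorem~\ref{theorem: modular units} is only an if-and-only-if criterion telling you \emph{which} products of the $\eta_{m,k}$ are modular functions on $\Gamma_0(N)$. It says nothing about whether an arbitrary modular unit is such a product. Your $\eta$-identities correctly collapse the full range $0\le k\le h(m)-1$ to the restricted range $0\le k\le\phi(h(m))-1$, but that only helps once you already know the unit is an $\eta_{m,k}$-product. So your argument establishes at most that the restricted products generate a subgroup $\sU_0\subseteq\sU(N)$ of finite index (via the cardinality count and linear independence), not that $\sU_0=\sU(N)$.

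This missing step is exactly where the paper does real work. The paper first uses your same cardinality count, then proves multiplicative independence of the restricted $\eta_{m,k}$ by reading off successive Fourier coefficients and invoking the fact that $1,\zeta_n,\ldots,\zeta_n^{\phi(n)-1}$ is a $\Q$-basis of $\Q(\zeta_n)$ (rather than computing an order matrix at the cusps). This gives finite index. To close the gap from finite index to equality, the paper appeals to the Kubert--Lang theorem that every modular unit on $X(N)$ is a product of Siegel functions (and, for even $N$, certain Weber-type functions), so its Fourier coefficients are algebraic integers. Then, if $g^\ell\in\sU_0$, comparing the ratio of the second to the leading Fourier coefficient of $g$ with that of the $\ell$th root of the $\eta$-product and using that $1,\zeta_n,\ldots,\zeta_n^{\phi(n)-1}$ is an \emph{integral} basis forces each $e_{m,k}/\ell\in\Z$, whence $g\in\sU_0$. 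Your proposal has no substitute for this integrality input; without something like Kubert--Lang, the step from ``finite index'' to ``every modular unit'' does not go through.
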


\begin{Remark} The interested reader may use the following relations
  \begin{equation} \label{equation: eta relations}
    \begin{split}
      \eta(\tau+1/2)&=e^{2\pi i/48}q^{1/24}\prod_{n\text{ even}}
      (1-q^n)\prod_{n\text{ odd}}(1+q^n) \\
      &=e^{2\pi i/48}q^{1/24}\prod_{n=1}^\infty(1-q^{2n})
      \prod_{n=1}^\infty\frac{1+q^{n}}{1+q^{2n}} \\
      &=e^{2\pi i/48}q^{1/24}\prod_{n=1}^\infty(1-q^{2n})
      \prod_{n=1}^\infty\frac{(1-q^{2n})^2}{(1-q^n)(1-q^{4n})}\\
      &=e^{2\pi/48}\frac{\eta(2\tau)^3}{\eta(\tau)\eta(4\tau)}\\
      \eta(\tau+1/3)\eta(\tau+2/3)
      &=e^{2\pi i/24}q^{1/12}\prod_{3|n}(1-q^n)^2
      \prod_{3\nmid n}(1-e^{2\pi i/3}q^n)(1-q^{4\pi i/3}q^n)\\
      &=e^{2\pi i/24}q^{1/12}\prod_{n=1}^\infty(1-q^{3n})^2
      \prod_{3\nmid n}\frac{1-q^{3n}}{1-q^n} \\
      &=e^{2\pi i/24}q^{1/12}\prod_{n=1}^\infty(1-q^{3n})
      \prod_{n=1}^\infty\frac{(1-q^{3n})^2}{(1-q^{9n})(1-q^n)}\\
      &=e^{2\pi i/24}\frac{\eta(3\tau)^4}{\eta(\tau)\eta(9\tau)}
    \end{split}
  \end{equation}
  to check that the remaining $\eta_{m,k}$ can all be expressed as a
  product of those in \eqref{equation: product of eta mk}.
  For instance, we have
  \begin{equation} \label{equation: 3/4}
    \begin{split}
      \eta(\tau+3/4)&=\eta(\tau+1/4+1/2)
      =\epsilon\frac{\eta(2\tau+1/2)^3}{\eta(\tau+1/4)\eta(4\tau)}\\
      &=\frac\epsilon{\eta(\tau+1/4)\eta(4\tau)}\left(
        \frac{\eta(4\tau)^3}{\eta(2\tau)\eta(8\tau)}\right)^3
      =\frac{\epsilon\eta(4\tau)^8}
      {\eta(\tau+1/4)\eta(2\tau)^3\eta(8\tau)^3}.
    \end{split}
  \end{equation}
  Here $\epsilon$ represents some root of unity and may not be the
  same at each occurrence.
\end{Remark}

As an application of our determination of modular units, in the next
theorem, we prove that $\sC_\Q(N)=\sC(N)(\Q)$ for $N=n^2M$ with $n|24$
and $M$ squarefree.

\begin{Theorem} \label{theorem: equality} Assume that $N=n^2M$ with
  $n|24$ and $M$ squarefree. Then
  $$
  \sC_\Q(N)=\sC(N)(\Q).
  $$
\end{Theorem}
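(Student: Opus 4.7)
The inclusion $\sC_\Q(N) \subseteq \sC(N)(\Q)$ is immediate. For the reverse, pick a degree-zero cuspidal divisor $D$ with $[D] \in \sC(N)(\Q)$: then for each $\sigma \in G := \Gal(\overline\Q/\Q)$ there is a modular unit $g_\sigma$, unique modulo $\C^\times$, with $\sigma(D) - D = \div(g_\sigma)$, and the assignment $\sigma \mapsto [g_\sigma] \in \sU(N)$ is a $1$-cocycle. The plan is to show this cocycle is a coboundary, i.e., to produce a single modular unit $f$ with $[\sigma(f)/f] = [g_\sigma]$ for every $\sigma$; then $D - \div(f)$ is a $\Q$-rational cuspidal representative of $[D]$, so $[D] \in \sC_\Q(N)$.

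More systematically, applying Galois cohomology to the short exact sequence
\begin{equation*}
0 \to \sU(N) \to \Div^0_{cusp}(\overline\Q) \to \sC(N) \to 0
\end{equation*}
of $G$-modules, and using $H^1(G, \Div^0_{cusp}) = 0$ (since $\Div_{cusp}$ is a permutation $G$-module so $H^1(G,\Div_{cusp}) = 0$ by Shapiro's lemma, and the degree map is surjective on $G$-invariants because $\infty$ is a $\Q$-rational cusp), one obtains a canonical isomorphism
\begin{equation*}
\sC(N)(\Q)/\sC_\Q(N) \;\simeq\; H^1(G, \sU(N)).
\end{equation*}
The task thus reduces to proving $H^1(G, \sU(N)) = 0$.

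For this I would exploit Theorems \ref{theorem: modular units} and \ref{theorem: modular units 2}. A direct inspection of the $q$-expansion of $\eta_{m,k}$ shows that $\Gal(\Q(\zeta_N)/\Q) \simeq (\Z/N\Z)^\times$ acts on the classes $[\eta_{m,k}] \bmod \C^\times$ by the permutation $\sigma_u \cdot [\eta_{m,k}] = [\eta_{m,\, uk \bmod h(m)}]$. Let $F$ be the free abelian group on the symbols $\{[\eta_{m,k}] : m \mid N,\ 0 \le k < h(m)\}$: then $F$ is a permutation $G$-module, so $H^1(G, F) = 0$ by Shapiro's lemma. By Theorem \ref{theorem: modular units} the sublattice $F_\sU \subseteq F$ cut out by conditions (a)--(d) is a $G$-submodule and carries a $G$-equivariant surjection $F_\sU \twoheadrightarrow \sU(N)$, $(e_{m,k}) \mapsto [\prod \eta_{m,k}^{e_{m,k}}]$, whose kernel $R$ consists of ``trivial'' $\eta$-identities of the type exemplified in the Remark. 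Chasing the cohomology of the short exact sequences $0 \to F_\sU \to F \to F/F_\sU \to 0$ and $0 \to R \to F_\sU \to \sU(N) \to 0$ reduces the vanishing of $H^1(G, \sU(N))$ to explicit assertions about the finitely generated $G$-modules $F/F_\sU$ and $R$.

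The main obstacle will be condition (d) of Theorem \ref{theorem: modular units}: unlike the linear congruences (a)--(c), the rational-square condition on $\prod_{m,k} m^{e_{m,k}}$ and the differing shape of the $\bmod\, n$ condition in the odd versus even case force a delicate case-by-case verification that every $G$-invariant class in $F/F_\sU$ has a $G$-invariant representative in $F$, and similarly for $R$. Once these lifting statements are established, Hilbert~90-style reasoning on eta products supplies the required modular unit $f$, trivialising the cocycle and completing the proof.
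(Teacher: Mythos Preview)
Your cohomological reformulation is correct and elegant: the identification $\sC(N)(\Q)/\sC_\Q(N)\simeq H^1(G,\sU(N))$ is valid, and the permutation structure on the free module $F$ does give $H^1(G,F)=0$. This is a genuinely different route from the paper's. The paper never computes cohomology; instead, for a given $D$ with $\div f=rD$, it shows directly (case by case in $n$) that $r\mid e_{m,k}$ for every $k\neq 0$, and then \emph{constructs} an explicit modular unit $g$ built from the auxiliary functions $\wt\eta_{m,k}$ of Lemma~\ref{lemma: wt eta} so that $f/g^r$ is an ordinary eta-quotient and $D'=D-\div g$ is $\Q$-rational. The paper's argument is thus entirely constructive and never needs to know that \emph{every} cocycle is a coboundary, only the one attached to $D$. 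Your approach, by contrast, aims for the stronger statement $H^1(G,\sU(N))=0$, which is conceptually cleaner but buys you nothing computationally: the ``delicate case-by-case verification'' you flag is precisely the work the paper does in Sections 3.1--3.5, and there is no visible shortcut.

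There is also a technical gap in your reduction. From $0\to R\to F_\sU\to \sU(N)\to 0$ you get
\[
H^1(G,F_\sU)\to H^1(G,\sU(N))\to H^2(G,R),
\]
so even after you prove every $G$-invariant class in $F/F_\sU$ lifts to $F^G$ (hence $H^1(G,F_\sU)=0$), you still need the connecting map into $H^2(G,R)$ to be injective with trivial image, or equivalently that $F_\sU^G\to\sU(N)^G$ is surjective \emph{and} the boundary $\sU(N)^G\to H^1(G,R)$ already accounts for everything. The module $R$ is generated by the eta-identities in the Remark after Theorem~\ref{theorem: modular units 2}; these are not all fixed individually by $G$ (e.g.\ for $h=4$ the relation expressing $\eta_{m,3}$ involves $\eta_{m,1}$, and $\sigma_{-1}$ swaps them), so $R$ is not obviously cohomologically trivial, and ``lifting statements'' alone will not finish the argument. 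The invocation of ``Hilbert 90-style reasoning'' is also misplaced: all the modules here are additive lattices, not multiplicative tori, so Hilbert 90 plays no role. In short, your framework is sound, but to complete it you would end up reproducing the paper's case analysis in cohomological dress, with an extra bookkeeping burden coming from $R$ and $H^2$.
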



\begin{Remark}
In fact, our main motivation for undertaking this research project is
to seek for examples with $\sC(N)(\Q)\neq\sC_\Q(N)$. Such an example
will be a direct counterexample to the conjecture that
$\sC_\Q(N)=J_0(N)(\Q)_\tor$. However, after computing many examples
and studying properties of modular units more thoroughly, we
found that the equality actually holds for levels under
consideration. In view of Theorem \ref{theorem: equality}, it is
perhaps reasonable to conjecture that the two inclusions in
\eqref{equation: inclusions} are both equalities for all levels $N$.
\end{Remark}


\section{Modular units on $X_0(N)$}
We remind the reader that the level $N$ is assumed to be $n^2M$ with
$n|24$ and $M$ squarefree. We first recall the transformation formula
for the Dedekind eta function.

\begin{Lemma}[{\cite[Pages 125--127]{Weber}}] \label{lemma: eta}
For $\gamma=\SM abcd\in\SL(2,\Z)$, the transformation formula for
$\eta(\tau)$ is given by, for $c=0$, 
$$
  \eta(\tau+b)=e^{2\pi ib/24}\eta(\tau),
$$
and, for $c\neq 0$,
$$
  \eta(\gamma\tau)=\epsilon(a,b,c,d)\sqrt{\frac{c\tau+d}i}\eta(\tau)
$$
with
\begin{equation}
\label{equation: epsilon}
  \epsilon(a,b,c,d)=
  \begin{cases}\displaystyle
   \left(\frac dc\right)i^{(1-c)/2}
   e^{2\pi i\left(bd(1-c^2)+c(a+d)\right)/24},
    &\text{if }c\text{ is odd},\\
  \displaystyle 
  \left(\frac cd\right)e^{2\pi i\left(ac(1-d^2)+d(b-c+3)\right)/24},
    &\text{if }d\text{ is odd},
  \end{cases}
\end{equation}
where $\JS dc$ is the Jacobi symbol.
\end{Lemma}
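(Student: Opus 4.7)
The plan is to reduce to the two standard generators $T = \SM 1101$ and $S = \SM 0{-1}10$ of $\SL(2,\Z)$, establish the transformation law for each, and then track the multiplier through a word decomposition. The $T$-case is immediate from the product expansion $\eta(\tau) = e^{\pi i\tau/12}\prod_{n\ge 1}(1-e^{2\pi in\tau})$, which is invariant under $\tau\mapsto\tau+1$; this gives $\eta(\tau+b) = e^{2\pi ib/24}\eta(\tau)$. The $S$-case, $\eta(-1/\tau) = \sqrt{\tau/i}\,\eta(\tau)$, I would prove via Siegel's short residue-theorem argument applied to an integral of the form $\int \cot(\pi z)\cot(\pi z/\tau)/z\,dz$ around a well-chosen parallelogram; alternatively, one may use Poisson summation on $e^{-\pi t x^2}$ together with the Jacobi triple product expression for $\eta$.

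With both generators in hand, for $c\ne 0$ define $\epsilon(\gamma)$ provisionally by $\eta(\gamma\tau) = \epsilon(\gamma)\sqrt{(c\tau+d)/i}\,\eta(\tau)$. This is a $24$-th root of unity because $\eta^{24} = \Delta$ is a weight-$12$ cusp form on $\SL(2,\Z)$ with trivial multiplier. Writing $\gamma$ as a word in $S$ and $T$ via the continued-fraction expansion of $a/c$, I would derive by induction Rademacher's closed form for $\epsilon(\gamma)$ in terms of the Dedekind sum
$$
s(d,c) = \sum_{k=1}^{c-1}\frac{k}{c}\left(\frac{kd}{c} - \left\lfloor\frac{kd}{c}\right\rfloor - \frac12\right),
$$
namely $\epsilon(\gamma) = \exp\!\left(\pi i\bigl(\frac{a+d}{12c} - s(d,c) - \frac14\bigr)\right)$ for $c>0$. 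The inductive step relies on the reciprocity law for $s(d,c)$, which mirrors the Euclidean algorithm on $(c,d)$.

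The main obstacle is the final passage from this Dedekind-sum formula to the Jacobi-symbol expression in the lemma. This rests on a classical evaluation---essentially equivalent to quadratic reciprocity---that converts $\exp(-6\pi i\, s(d,c))$ into a Jacobi symbol $\JS dc$ multiplied by an explicit $24$-th root of unity whose exponent depends on $a,b,c,d$ modulo small numbers. Since $\JS dc$ requires $c$ positive and odd, the formula splits naturally into the two listed cases: the ``$c$ odd'' form is the direct conversion, while the ``$d$ odd'' form is obtained by applying the ``$c$ odd'' version to $\gamma S^{-1}$ (which swaps the roles of $c$ and $d$ up to signs) and then inserting the known $S$-transformation. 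When both $c$ and $d$ are odd, consistency of the two expressions is a useful sanity check. Finally, the normalizing factors $i^{(1-c)/2}$ and the additive $+3$ in the $d$-odd case, together with the principal branch of $\sqrt{(c\tau+d)/i}$, are pinned down by evaluating both formulas on the special matrices $\pm I$, $T$, and $S$.
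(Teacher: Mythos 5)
The paper does not prove this lemma at all: it is quoted as a classical fact from Weber \cite[pp.~125--127]{Weber}, so there is no internal argument to compare yours against. Your outline is the standard modern route to Weber's formula and is correct in structure: the $T$-case from the $q$-product, the $S$-case $\eta(-1/\tau)=\sqrt{\tau/i}\,\eta(\tau)$ by Siegel's residue computation or Poisson summation, the observation that $\eta^{24}=\Delta$ forces the multiplier to be a $24$th root of unity, Rademacher's closed form $\epsilon(\gamma)=\exp\bigl(\pi i\bigl(\tfrac{a+d}{12c}-s(d,c)-\tfrac14\bigr)\bigr)$ via induction on a word in $S,T$ using Dedekind-sum reciprocity, and finally the conversion of $\exp(-\pi i\,s(d,c))$ into a Jacobi symbol, which is indeed where quadratic reciprocity enters and where all the delicate bookkeeping lives. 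Two small points deserve more care than your sketch gives them. First, Rademacher's formula and the symbol $\JS dc$ are naturally stated for $c>0$, whereas the lemma allows any $c\neq 0$; one must either pass to $-\gamma$ and track the resulting sign through the principal branch of $\sqrt{(c\tau+d)/i}$, or interpret $\JS dc$ as a Kronecker symbol for $c<0$, and this affects exactly the normalizing factors ($i^{(1-c)/2}$, the $+3$) you propose to pin down by testing on $\pm I$, $T$, $S$. Second, checking a formula that is a priori a $24$th root of unity on a handful of matrices does not by itself determine it; the honest way to finish is to verify that the claimed two-case expression satisfies the same cocycle relation as $\epsilon$ under right multiplication by $T$ and $S$ (or to carry out the Dedekind-sum-to-Jacobi-symbol identity in full), after which agreement on generators suffices. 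Neither issue is a conceptual obstruction --- both are handled in the classical references --- but as written your final step is a plan rather than a proof.
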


\begin{Lemma} \label{lemma: eta mk}
  Assume that $m|N$. Let $h=h(m)$ and for an integer $k$, let
  $\eta_{m,k}(\tau):=\eta(m\tau+k/h)$.
  \begin{enumerate}
  \item[(a)] For $\gamma=\SM ab{c}d\in\Gamma_0(N)$,
  we have, when $c=0$
  $$
  \eta_{m,k}(\tau+b)=e^{2\pi ibm/24}\eta_{m,k}(\tau)
  $$
  and when $c\neq 0$,
  $$
  \eta_{m,k}(\gamma\tau)=\epsilon\left(a+\frac{kc}{hm},\frac{k(d-a)}h
        +bm-\frac{k^2c}{h^2m},\frac{c}m,d-\frac{kc}{hm}\right)
      \sqrt{\frac{c\tau+d}i}\eta_{m,k}(\tau),
  $$
  where $\epsilon$ is defined by \eqref{equation: epsilon}.
  \item[(b)] Let $a/c$ with $c|N$ be a cusp of $X_0(N)$. Write
    $(mha+kc)/hc$ in the reduced form $a'/c'$, $(a',c')=1$. Then the
    order of $\eta_{m,k}(\tau)$ at $a/c$ is
    $$
    \frac{cN}{24m(c')^2(c,N/c)}.
    $$
  \end{enumerate}
\end{Lemma}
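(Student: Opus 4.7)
For part (a), the natural approach is to find an auxiliary matrix $\gamma'=\SM{A}{B}{C}{D}$ in $\SL(2,\Z)$ such that $m\gamma\tau+k/h=\gamma'(m\tau+k/h)$, so that part (a) then follows from Lemma \ref{lemma: eta} applied to $\gamma'$ together with the identity $C(m\tau+k/h)+D=c\tau+d$. Setting $w=m\tau+k/h$ and carrying out the substitution $\tau=(w-k/h)/m$ in $m\gamma\tau+k/h$ shows that one must take
$$
A=a+\frac{kc}{hm},\quad B=bm+\frac{k(d-a)}{h}-\frac{k^2 c}{h^2 m},\quad C=\frac{c}{m},\quad D=d-\frac{kc}{hm}.
$$
A direct expansion gives $AD-BC=ad-bc=1$. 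The case $c=0$ reduces to the shift relation $\eta(\tau+1)=e^{2\pi i/24}\eta(\tau)$ applied to $m\tau+k/h$, yielding the stated factor $e^{2\pi ibm/24}$.

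The crucial step is the integrality of $A,B,C,D$. Since $m\mid N\mid c$, we have $C\in\Z$. Since $mh^2\mid N\mid c$, we have $hm\mid c$, which makes $A,D\in\Z$ and also makes the term $ck^2/(h^2m)$ in $B$ an integer. The remaining difficulty, which is the main obstacle of part (a), is to show that $k(d-a)/h\in\Z$. Here one uses $h\mid 24$: from $ad-bc=1$ and $h^2\mid N\mid c$ one has $ad\equiv1\pmod h$, and from $\gcd(a,h)=1$ together with the classical fact that $(\Z/h\Z)^\times$ has exponent dividing $2$ for every $h\mid 24$, one obtains $a^2\equiv 1\pmod h$, hence $d\equiv a\pmod h$. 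This gives $B\in\Z$ and completes part (a).

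For part (b), I would take $\sigma=\SM{a}{b}{c}{d}\in\SL(2,\Z)$ mapping $\infty$ to $a/c$, so that
$$
m\sigma\tau+\frac{k}{h}=\frac{(mha+kc)\tau+(mhb+kd)}{hc\tau+hd}.
$$
Writing $a'/c'=(mha+kc)/(hc)$ in lowest terms with $g=\gcd(mha+kc,hc)$, so that $a'=(mha+kc)/g$ and $c'=hc/g$, the determinant of the above $2\times 2$ matrix is $mh^2$. Completing $(a',c')$ to $\gamma_0=\SM{a'}{b'}{c'}{d'}\in\SL(2,\Z)$ and factoring
$$
\begin{pmatrix}mha+kc&mhb+kd\\hc&hd\end{pmatrix}=\gamma_0\cdot\begin{pmatrix}g&E\\0&mh^2/g\end{pmatrix}
$$
for a suitable integer $E$ lets me write $m\sigma\tau+k/h=\gamma_0\bigl((g\tau+E)/(mh^2/g)\bigr)$. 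Applying Lemma \ref{lemma: eta} to $\gamma_0$ and extracting the leading behaviour of $\eta((g\tau+E)/(mh^2/g))$ as $\tau\to i\infty$ (i.e.\ the factor $q^{1/24}$) gives the expansion in $e^{2\pi ig^2\tau/(24mh^2)}$.

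To turn this leading exponent into the order at the cusp $a/c$ on $X_0(N)$, I multiply by the width $N/(c\gcd(c,N/c))$, obtaining
$$
\frac{g^2}{24mh^2}\cdot\frac{N}{c\gcd(c,N/c)}=\frac{cN}{24m(c')^2\gcd(c,N/c)},
$$
since $g=hc/c'$. I expect the only real subtlety here to be keeping the Möbius bookkeeping and the width normalisation straight; the algebra itself is routine and does not use $n\mid 24$ beyond what is already needed in part (a).
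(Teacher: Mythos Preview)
Your proposal is correct and follows essentially the same route as the paper: for part~(a) you recover exactly the conjugated matrix $\gamma'=\sigma\gamma\sigma^{-1}$ with $\sigma=\SM{mh}{k}{0}{h}$ (the paper writes it as a conjugation, you obtain it by substitution), and the integrality argument via $a\equiv d\pmod h$ for $h\mid 24$ is the same key point. For part~(b) your Hermite-normal-form factorization $M=\gamma_0\SM{g}{E}{0}{mh^2/g}$ is just a systematic way of producing the identity the paper states and asks the reader to check, and the subsequent extraction of the leading exponent $g^2/(24mh^2)=c^2/(24m(c')^2)$ followed by multiplication by the width $N/(c\,(c,N/c))$ matches the paper's computation exactly.
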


Note that when $k=0$, we have $c'=c/(m,c)$ and the formula shows that
the order of $\eta(m\tau)$ at $a/c$ is
$$
\frac{N(m,c)^2}{24mc(c,N/c)},
$$
agreeing with the formula given in \cite[Proposition 3.2.8]{Ligozat}.

\begin{proof} It is clear that $\eta_{m,k}(\tau+b)=e^{2\pi
    ibm/24}\eta_{m,k}(\tau)$.
  Let $\sigma=\SM{mh}k0h$ so that $\eta_{m,k}(\tau)=\eta(\sigma\tau)$.  Let
  $$
  \gamma'=\sigma\gamma\sigma^{-1}
 =\M{a+kc/mh}{k(d-a)/h+bm-k^2c/mh^2}{c/m}{d-kc/mh}.
  $$
  Since $h$ is a divisor of $24$, we have $a\equiv d\mod h$ and
  $\gamma'\in\SL(2,\Z)$\footnote{This is where the assumption $h|24$
    is required. For 
    general $h$, $\eta_{m,k}|\SM abcd$ will equal to
    $\epsilon\eta_{m,k'}$ for some root of unity $\epsilon$, where
    $k'$ is an integer satisfying $ak'\equiv dk\mod h$.}.
  Then by Lemma \ref{lemma: eta}
  \begin{equation*}
    \begin{split}
      \eta_{m,k}(\gamma\tau)&=\eta(\sigma\gamma\tau)
      =\eta(\gamma'\sigma\tau) \\
      &=\epsilon\left(a+\frac{kc}{mh},\frac{k(d-a)}h
        +bm-\frac{k^2c}{mh^2},\frac{c}m,d-\frac{kc}{mh}\right)
      \sqrt{\frac{c\tau+d}i}\eta_{m,k}(\tau).
    \end{split}
  \end{equation*}
  We now prove Part (b).

  Let $a/c$ with $c|N$ be a cusp of $X_0(N)$. Let $b$, $d$, $b'$, and
  $d'$ be integers such that $\gamma=\SM
  abcd,\SM{a'}{b'}{c'}{d'}\in\SL(2,\Z)$. We check that
  $$
  m\M abcd\tau+\frac kh=\M{a'}{b'}{c'}{d'}
  \left(\frac{c(c\tau+d)}{m(c')^2}-\frac{d'}{c'}\right).
  $$
  It follows that
  $$
  \eta_{m,k}(\gamma\tau)=u\sqrt{\frac{c\tau+d}i}
  \eta\left(\frac{c(c\tau+d)}{m(c')^2}-\frac{d'}{c'}\right)
  $$
  for some nonzero complex number $u$. Since a cusp of level $c$ on
  $X_0(N)$ has width $N/c(c,N/c)$, we find that the order of
  $\eta_{m,k}(\tau)$ at $a/c$ is
  $$
  \frac{c^2}{24m(c')^2}\cdot\frac N{c(c,N/c)}
  =\frac{cN}{24m(c')^2(c,N/c)}.
  $$
  This completes the proof of the lemma.
\end{proof}

\begin{Corollary} \label{corollary: orders at infinity and 0}
  If the product in \eqref{equation: product of eta mk} is a
  modular function on $\Gamma_0(N)$, then the integers $e_{m,k}$
  satisfy
  \begin{equation} \label{equation: condition from weight}
    \sum_{m,k}e_{m,k}=0,
  \end{equation}
  \begin{equation} \label{equation: condition from infinity}
  \sum_{m,k}me_{m,k}\equiv 0\mod 24,
  \end{equation}
  and
  \begin{equation} \label{equation: condition from 0}
  \sum_{m,k}\frac{N(h(m),k)^2}{mh(m)^2}e_{m,k}\equiv 0\mod 24.
  \end{equation}
\end{Corollary}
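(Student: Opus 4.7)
The plan is to derive the three conditions separately from three basic facts: (a) from the vanishing of the weight, (b) from integrality of the order of the product at the cusp $\infty$, and (c) from integrality of the order at the cusp $0$. The underlying point is that a modular function on $X_0(N)$ is a rational function on the compact Riemann surface $X_0(N)$, so it has weight zero and integer order at every cusp.

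For (a), I would observe that by Lemma \ref{lemma: eta mk}(a), each factor $\eta_{m,k}$ transforms under $\Gamma_0(N)$ like a weight-$1/2$ form (up to a multiplier); hence the product has weight $\frac12\sum_{m,k}e_{m,k}$, and the vanishing of this expression is precisely \eqref{equation: condition from weight}.

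For (b), I would read the order at $\infty$ directly from the $q$-expansion: since
$$
\eta_{m,k}(\tau)=e^{2\pi ik/(24h)}q^{m/24}\prod_{\ell=1}^\infty(1-e^{2\pi ik\ell/h}q^{m\ell}),
$$
the order of $\eta_{m,k}$ at $\infty$ is $m/24$, so the order of the product is $\frac1{24}\sum_{m,k}me_{m,k}$, and demanding this be an integer yields \eqref{equation: condition from infinity}.

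For (c), I would apply Lemma \ref{lemma: eta mk}(b) at the cusp $0=0/1$, i.e., with $a=0$ and $c=1$. There the fraction $(mha+kc)/(hc)$ becomes $k/h$, whose reduced denominator is $c'=h/(h,k)$ (using the convention $(h,0)=h$, so $c'=1$ when $k=0$). Consequently the order of $\eta_{m,k}$ at $0$ equals
$$
\frac{cN}{24m(c')^2(c,N/c)}=\frac{N(h(m),k)^2}{24mh(m)^2},
$$
and requiring the corresponding sum to be an integer gives \eqref{equation: condition from 0}. No step presents a real obstacle: once Lemma \ref{lemma: eta mk} is in hand, the corollary reduces to routine bookkeeping, with the only mild subtlety being the correct reduction of $k/h$ when computing $c'$.
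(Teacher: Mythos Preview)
Your proposal is correct and follows essentially the same approach as the paper's own proof: both derive the three conditions from the weight being zero, the order at $\infty$ being an integer (read off from the $q$-expansion), and the order at $0$ being an integer (computed via Lemma~\ref{lemma: eta mk}(b) with $a=0$, $c=1$, giving $c'=h(m)/(h(m),k)$). Your handling of the reduction of $k/h$ and the convention $(h,0)=h$ matches the paper exactly.
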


\begin{proof} In order for the product to be a modular function on
  $\Gamma_0(N)$, it is necessary that its weight is $0$ and its orders
  at $\infty$ and $0$ are integers. The condition that the weight is
  $0$ translates to \eqref{equation: condition from weight}.
  Also, the order of $\eta_{m,k}$ at $\infty$ is $m/24$. Hence the
  condition that the order at $\infty$ is an integer translates to
  \eqref{equation: condition from infinity}. Finally, the order of
  $\eta_{m,k}$ at $0$ is determined by Part (b) of Lemma \ref{lemma:
    eta mk} (with $a=0$, $c=1$, $a'=k/(h(m),k)$, and
  $c'=h(m)/(h(m),k)$). We find that it is
  $$
  \frac{N}{24m(h(m)/(h(m),k))^2}.
  $$
  This explains the condition \eqref{equation: condition from 0}.
\end{proof}

\begin{Lemma} \label{lemma: simplify 1}
  Assume that
  $$
  f(\tau)=\prod_{m|N}\prod_{k=0}^{h(m)-1}\eta_{m,k}^{e_{m,k}}
  $$
  is a product satisfying the three conditions in Corollary
  \ref{corollary: orders at infinity and 0}. Let $G$ be the subgroup
  of $\Gamma_0(N)$ generated by $\SM1101$ and $\SM10N1$. Then for
  $\gamma\in\Gamma_0(N)$, the value of the root of unity
  $\mu$ in $f(\gamma\tau)=\mu f(\tau)$ depends only on the
  right coset $G\gamma$ of $\gamma$ in $\Gamma_0(N)$.
\end{Lemma}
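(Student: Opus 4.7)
The plan is to recast the claim as the triviality of a character on $G$. By condition (a) the weight of $f$ is zero, so $\mu(\gamma):=f(\gamma\tau)/f(\tau)$ is well-defined on $\Gamma_0(N)$ with no automorphy factor, and the cocycle identity $\mu(\gamma_1\gamma_2)=\mu(\gamma_1)\mu(\gamma_2)$ makes $\mu\colon\Gamma_0(N)\to\C^\times$ a group homomorphism. The assertion that the root of unity $\mu(\gamma)$ depends only on the right coset $G\gamma$ is therefore equivalent to $\mu|_G\equiv 1$, and since $G$ is generated by $T=\SM1101$ and $T_N=\SM10N1$, it suffices to prove $\mu(T)=\mu(T_N)=1$.

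The equality $\mu(T)=1$ is immediate from $\eta_{m,k}(\tau+1)=e^{2\pi im/24}\eta_{m,k}(\tau)$ combined with condition (b). For $\mu(T_N)$, I apply Lemma~\ref{lemma: eta mk}(a) to each factor $\eta_{m,k}$ with $\gamma=T_N$ (so $a=d=1$, $b=0$, $c=N$). By condition (a), the square-root automorphy factor $\sqrt{(N\tau+1)/i}^{\sum e_{m,k}}$ cancels, and the task reduces to verifying
$$
\prod_{m,k}\epsilon\!\left(1+\tfrac{kN}{h(m)m},\,-\tfrac{k^2N}{h(m)^2m},\,\tfrac{N}{m},\,1-\tfrac{kN}{h(m)m}\right)^{e_{m,k}}=1,
$$
with $\epsilon$ as in \eqref{equation: epsilon}. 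I would then expand each $\epsilon$-factor using that formula and separate it into three pieces: a Jacobi symbol, a power of $i$, and an exponential of $(2\pi i/24)$ times an integer. The exponential piece is a $\Z$-linear combination of the quantities $m$ and $N(h(m),k)^2/(m\,h(m)^2)$ appearing in (b) and (c), and together those two conditions force it to vanish modulo $24$.

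The principal obstacle is handling the product of the Jacobi symbols and the $i^{(1-c')/2}$ factors. Because \eqref{equation: epsilon} branches on the parities of $c'=N/m$ and $d'=1-kN/(h(m)m)$, the argument proceeds by case analysis on these parities, and ultimately uses quadratic reciprocity to fold the product of Jacobi symbols into a power of $-1$ governed by the same mod-$2$ data that (b) and (c) already constrain. Tracking all of these phase contributions simultaneously modulo $24$ is the real work, even though each individual piece is elementary.
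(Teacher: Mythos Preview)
Your reduction to showing $\mu(T)=\mu(T_N)=1$ via the multiplicativity of $\mu$ is exactly right, and your treatment of $\mu(T)=1$ matches the paper's. Where you diverge is in handling $\mu(T_N)=1$: you propose a head-on computation with the $\epsilon$-formula from Lemma~\ref{lemma: eta mk}(a), whereas the paper bypasses this entirely.

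The paper's observation is that $T_N=\SM10N1$ generates the isotropy subgroup of the cusp $0$ in $\Gamma_0(N)$, just as $T$ generates that of $\infty$. So the same mechanism that makes condition (b) force $\mu(T)=1$ makes condition (c) force $\mu(T_N)=1$. Concretely, set $g(\tau)=f(-1/N\tau)$; one checks that $f(T_N\tau)=f(\tau)$ holds if and only if $g(\tau-1)=g(\tau)$, and the latter follows directly from condition (c), which says precisely that the order of $f$ at the cusp $0$ is an integer. No Jacobi symbols, no parity case split, no quadratic reciprocity.

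Your route could in principle be pushed through, since the answer is $1$, but the part you yourself flag as the ``principal obstacle'' is not carried out, and your assertion that the product of Jacobi symbols and $i^{(1-c')/2}$ factors collapses under (b) and (c) alone is only a hope at this stage. Even the preliminary claim that the exponential piece of $\epsilon$ is a $\Z$-linear combination of $m$ and $N(h(m),k)^2/(m\,h(m)^2)$ is not obvious: with $\gamma=T_N$ the entries $a'=1+kN/(h m)$ and $d'=1-kN/(h m)$ in \eqref{equation: epsilon} produce cross terms when expanded. The paper's argument shows this entire computation is unnecessary, because the three conditions of Corollary~\ref{corollary: orders at infinity and 0} are by design exactly the invariance of $f$ under the stabilizers of the two distinguished cusps.
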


\begin{proof} Let $\sigma=\SM1101$ and $\sigma'=\SM10N1$. The
  condition in \eqref{equation: condition from infinity}
  clearly implies that
  $$
  f(\sigma\gamma\tau)=f(\gamma\tau)
  $$
  for all $\gamma\in\Gamma_0(N)$. Likewise, since $\sigma'$ is a
  generator of the isotropy subgroup of the cusp $0$, the condition
  \eqref{equation: condition from 0} implies that
  $$
  f(\sigma'\gamma\tau)=f(\gamma\tau)
  $$
  for all $\gamma\in\Gamma_0(N)$. (More concretely, we may set
  $g(\tau)=f(-1/N\tau)$ and verify that $f(\sigma'\tau)=f(\tau)$ holds
  if and only if $g(\tau-1)=g(\tau)$ holds. Then notice that the
  latter follows from \eqref{equation: condition from 0}.) This proves
  the lemma.
\end{proof}

\begin{Lemma} \label{lemma: simplify 2}
  Let $G$ be the subgroup of $\Gamma_0(N)$ generated by
  $\sigma=\SM1101$ and $\sigma'=\SM10N1$. Then every right coset in
  $G\backslash\Gamma_0(N)$ contains an element $\SM abcd$ such that
  $24N|c$.
\end{Lemma}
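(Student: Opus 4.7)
The plan is to exploit the fact that left multiplication by the two generators of $G$ modifies the first column entries $a$ and $c$ of $\gamma=\SM abcd$ in a controlled way. Explicitly, one computes
$$
\sigma^k\gamma=\M{a+kc}{b+kd}cd,\qquad
(\sigma')^\ell\gamma=\M ab{c+\ell Na}{d+\ell Nb},
$$
so powers of $\sigma$ alter $a$ by multiples of $c$ while preserving $c$, and powers of $\sigma'$ alter $c$ by multiples of $Na$ while preserving $a$. Thus the idea is first to replace $a$ by some $a':=a+kc$ that is coprime to $24$, and then to use $(\sigma')^\ell$ to push $c$ into $24N\Z$.

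For the first step, I would show that some integer $k$ makes $\gcd(a+kc,6)=1$, which is equivalent to $\gcd(a+kc,24)=1$. For each prime $p\in\{2,3\}$, the argument splits into two cases: if $p\mid c$, then $p\nmid a$ already (because $(a,c)=1$), so $p\nmid a+kc$ for every $k$; if $p\nmid c$, then $a+kc$ traverses every residue class modulo $p$ as $k$ varies, so we may choose $k\bmod p$ to avoid $0$. The Chinese remainder theorem combines these choices into a single $k\bmod 6$ that achieves $\gcd(a',6)=1$ simultaneously.

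For the second step, write $c=Nc_0$ with $c_0\in\Z$. Applying $(\sigma')^\ell$ to $\sigma^k\gamma$ produces a new bottom-left entry
$$
c+\ell N a' = N(c_0+\ell a').
$$
Since $\gcd(a',24)=1$, the element $a'$ is a unit modulo $24$, and so we can solve $\ell a'\equiv-c_0\pmod{24}$ for $\ell$. With this choice, $24\mid c_0+\ell a'$, so $24N\mid c+\ell Na'$, and $(\sigma')^\ell\sigma^k\gamma$ is the desired representative in $G\gamma$.

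The only mildly delicate point is the first step, where one must verify that the coprimality of $a$ and $c$ forces $\gcd(a+kc,6)=1$ to be achievable; this is a short case analysis and is not a real obstacle. Once this is in place, the second step is automatic because reducing modulo $24$ against a unit always has a solution.
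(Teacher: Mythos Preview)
Your proof is correct and follows essentially the same two-step strategy as the paper: first left-multiply by a power of $\sigma$ to make the top-left entry coprime to $6$ (hence to $24$), then left-multiply by a power of $\sigma'$ to kill $c/N$ modulo $24$. The only cosmetic difference is that the paper writes down an explicit shift $t=6/(a,6)$ in the first step rather than invoking the Chinese remainder theorem.
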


\begin{proof} Assume that $\gamma=\SM abcd\in\Gamma_0(N)$. Let
  $s=(a,6)$ and 
  $t=6/s$. Since $(a,c)=1$, we have $(c,s)=1$ and hence $(a+tc,6)=1$.
  Let $c'=c/N$ and $r$ be an integer such that $r(a+tc)+c'\equiv
  24$. Now
  $$
  \M10{rN}1\M1t01\M abcd=\M{a+tc}{b+td}{c+Nr(a+tc)}{d+rN(b+td)}.
  $$
  By our choice of $r$, the $(2,1)$-entry of the last matrix is
  divisible by $24N$. This proves the
  lemma\footnote{This proof is suggested by one of the referees.}.
\end{proof}

%
%

We are now ready to prove Theorem \ref{theorem: modular
  units}.

\begin{proof}[Proof of Theorem \ref{theorem: modular
    units}]
  Let
  $$
  f(\tau)=\prod_{m|N}\prod_{k=0}^{h(m)-1}
  \eta_{m,k}^{e_{m,k}}.
  $$
  By Corollary \ref{corollary: orders at infinity and 0}, in order for
  $f$ to be a modular function on $\Gamma_0(N)$, it is necessary that
  $f$ satisfies the three conditions \eqref{equation: condition from
    weight}, \eqref{equation: condition from infinity}, and
  \eqref{equation: condition from 0}, which we assume from now on.

  Let $\gamma=\SM abcd\in\Gamma_0(N)$. By Lemmas \ref{lemma: simplify
    1} and \ref{lemma: simplify 2}, we may assume that
  $24N|c$. When $c=0$, i.e., when $\gamma=\SM1b01$, we have
  $$
  \eta_{m,k}(\tau+b)=e^{2\pi ibm/24}\eta_{m,k}(\tau)
  $$
  and hence $f(\tau+b)=f(\tau)$ by Condition \eqref{equation:
    condition from infinity} that $\sum_{m,k}me_{m,k}\equiv
  0\mod 24$.
  
  When $c\neq 0$, we apply Lemma \ref{lemma: eta mk} and obtain
  $$
  \eta_{m,k}(\gamma\tau)=\epsilon\left(
    a+\frac{kc}{hm},bm+\frac{k(d-a)}h-\frac{k^2c}{h^2m},
    \frac cm,d-\frac{kc}{hm}\right)\sqrt{\frac{c\tau+d}i}
  \eta_{m,k}(\tau),
  $$
  where $\epsilon$ is given by \eqref{equation: epsilon}. Since
  $24N|c$, we have
  $$
  24\Big|\frac{kc}{hm},\ \frac{k^2c}{h^2m},\ \frac cm
  $$
  for all $m|N$ and all $k$. Hence,
  \begin{equation*}
    \begin{split}
    &\epsilon\left(
    a+\frac{kc}{hm},bm+\frac{k(d-a)}h-\frac{k^2c}{h^2m},
    \frac cm,d-\frac{kc}{hm}\right)
  =\JS{c/m}{d-kc/hm}e^{2\pi iS/24},
    \end{split}
  \end{equation*}
  where
  $$
  S=d\left(bm+\frac{k(d-a)}h+3\right).
  $$
  Now $h$ is relatively prime to $d-kc/hm$ since $h^2m|c$ and
  $(d,N)=1$. Therefore,
  $$
  \JS{c/m}{d-kc/hm}=\JS{c/h^2m}{d-kc/hm}=\JS{c/h^2m}d
  =\JS{cm}d.
  $$
  It follows that, by \eqref{equation: condition from weight},
  $f(\gamma\tau)=\mu_1\mu_2f(\tau)$, where
  $$
  \mu_1=\prod_{m,k}\JS md^{e_{m,k}}, \qquad
  \mu_2=\exp\Bigg\{\frac{2\pi i}{24}\sum_{m,k}
    e_{m,k}\left(bdm+\frac{kd(d-a)}h\right)\Bigg\}.
  $$
  Since $f$ is assumed to satisfy \eqref{equation: condition from
    infinity}, we have
  $$
  \sum_{m,k}e_{m,k}bdm\equiv 0\mod 24.
  $$
  Also, because $24|c$, we have $24|(d-a)$, say, $d-a=24d'$. We deduce
  that
  $$
  \mu_2=\exp\Bigg\{2\pi i\frac{dd'}n\sum_{m,k}e_{m,k}\frac{kn}h\Bigg\}.
  $$
  Since the value of $\mu_1$ can only be $\pm1$ and that of $\mu_2$ is
  an $n$th root of unity, when $n=3$, we need to
  have $\mu_1=\mu_2=1$. By varying $\gamma=\SM abcd$, we conclude that
  in the case $n=3$, $f(\gamma\tau)=f(\tau)$ holds for all
  $\gamma\in\Gamma_0(N)$ if and
  only if $\prod_{m,k}m^{e_{m,k}}$
  is the square of a rational number and
  $$
  \sum_{m,k}e_{m,k}\frac{kn}h\equiv 0\mod 3,
  $$
  which is the equivalent to $\sum_{m,k}e_{m,k}k\equiv0\mod3$.

  In the case $n$ is even, we need $\mu_1=\mu_2=1$ or
  $\mu_1=\mu_2=-1$. As $d$ is relatively prime to $n$ and there are
  $\SM abcd\in\Gamma_0(N)$ such that $24|c$ and $d'=(d-a)/24$ is also
  relatively prime to $n$, we find that
  the sum $\sum_{m,k}e_{m,k}kn/h$ must be a multiple of $n/2$, say,
  $$
  \sum_{m,k}e_{m,k}\frac{kn}h=\frac{k'n}2
  $$
  for some $k'\in\Z$. Then
  $$
  \mu_2=(-1)^{d'k'}.
  $$
  Now since $32|c$, we have
  $$
  (-1)^{d'}=(-1)^{(d-a)/8}=(-1)^{(d^2-1)/8}=\JS 2d.
  $$
  It follows that
  $$
  \mu_1\mu_2=\JS 2d^{k'}\prod_{m,k}\JS md^{e_{m,k}}.
  $$
  By varying $\SM abcd\in\Gamma_0(N)$, we see that $f$ is a modular
  function on $\Gamma_0(N)$ if and only if
  $$
  2^{k'}\prod_{m,k}m^{e_{m,k}}
  $$
  is the square of a rational number, in addition to the three
  conditions \eqref{equation: condition from weight}, \eqref{equation:
    condition from infinity}, and \eqref{equation: condition from 0},
  or equivalently, the odd part of $\prod_{m,k}m^{e_{m,k}}$ is the
  square of a rational number and
  $$
  \sum_{m,k}e_{m,k}\left(\frac{kn}h+\frac n2\mathrm{ord}_2(m)
    \right)\equiv 0\mod n.
  $$
  This completes the proof of Theorem \ref{theorem: modular units}.
\end{proof}

We next prove Theorem \ref{theorem: modular units 2}.

\begin{proof}[Proof of Theorem \ref{theorem: modular units 2}]
  Theorem \ref{theorem: modular units 2} will follow once we prove
  the following four claims.
  \begin{enumerate}
    \item[(a)] The number of pairs $(m,k)$ with $m|N$ and $0\le
      k\le\phi(h(m))-1$ is equal to the number of cusps of $X_0(N)$.
    \item[(b)] There are no multiplicative relations among
      $\eta_{m,k}$ with $m|N$ and $0\le k\le\phi(h(m))-1$.
    \item[(c)] Let $\sU_0$ be the subgroup of $\sU(N)$ formed by
      products $\prod_{m|N}\prod_{k=0}^{\phi(h(m))-1}\eta_{m,k}^{e_{m,k}}$
      satisfying the conditions in Theorem \ref{theorem: modular
        units}. Then $\sU_0$ is of finite index in $\sU(N)$, which
      implies that if $g\in\sU(N)$, then there exists a positive
      integer $\ell$ such that $cg^\ell$ is in $\sU_0$ for some
      nonzero complex number $c$.
    \item[(d)] If $g\in\sU(N)$ and $\ell$ is a positive integer such
      that $cg^\ell$ is in $\sU_0$ for some $c\in\C^\times$, then
      $c'g\in\sU_0$ for some $c'\in\C^\times$.
    \end{enumerate}
  To prove Claim (a), we first observe that for a given divisor $h_0$
  of $n$, a divisor $m$ of $N$ satisfies $h(m)=h_0$ if and only if
  $m|N/h_0^2$ and $N/mh_0^2$ is squarefree. Let $\mu$ be the Mobius
  function so that $\mu^2$ is the characteristic function of
  squarefree integers. Then the number of pairs $(m,k)$ with $m|N$ and
  $0\le k\le\phi(h(m))-1$ is
  $$
  \sum_{h|n}\phi(h)\sum_{m|N/h^2}\mu(m)^2.
  $$
  On the other hand, the number of cusps of $X_0(N)$ is
  $$
  \sum_{m|N}\phi((m,N/m))=\sum_{h|n}\phi(h)
  \sum_{m'|N/h^2,(m',N/m'h^2)=1}1
  =\sum_{h|n}\phi(h)2^{\omega(N/h^2)},
  $$
  where for a positive integer $k$, $\omega(k)$ denotes the number of
  prime factors of $k$. Now we check that both functions
  $k\mapsto\sum_{m|k}\mu(m)^2$ and $k\mapsto 2^{\omega(k)}$ are
  multiplicative and agree on prime powers. Therefore, we have
  $$
  \sum_{m|N/h^2}\mu(m)^2=2^{\omega(N/h^2)}.
  $$
  This proves Claim (a).

  We next prove Claim (b). 
  Assume that $e_{m,k}$ are integers such that
  $$
  \prod_{m|N}\prod_{k=0}^{\phi(h(m))-1}\eta_{m,k}^{e_{m,k}}
  $$
  is a constant function. Considering the second term in its Fourier
  expansion, we find that
  $$
  \sum_{k=0}^{\phi(n)-1}e_{1,k}\zeta_n^k=0, \quad
  \zeta_n=e^{2\pi i/n}.
  $$
  Recall that $1,\ldots,\zeta_n^{\phi(n)-1}$ form a basis of
  $\Q(\zeta_n)$ over $\Q$ (see, for instance, \cite[Theorem
  6.4]{Milne}). Hence $e_{1,k}=0$ for all
  $k=0,\ldots,\phi(n)-1$. Similarly, 
  by considering the Fourier coefficients of $q^m$ for the next
  divisor $m$ of $N$, we find that $e_{m,k}=0$ for
  $k=0,\ldots,\phi(h(m))-1$ for the next divisor $m$ of $N$.
  Continuing in this way, we find that $e_{m,k}=0$ for all $(m,k)$.
  
  For Claim (c), we observe that $\sU_0$ contains at least those
  products having $24|e_{m,k}$ for all $e_{m,k}$ and $\sum e_{m,k}=0$.
  It follows that, by Claim (b), the rank of $\sU_0$ is at least
  $$
  \#\{(m,k):m|N,~0\le k\le \phi(h(m))-1\}-1,
  $$
  which, by Claim (a), is equal to the number of cusps of $X_0(N)$
  minus $1$. Therefore, $\sU_0$ and $\sU(N)$ have the same
  rank. We now prove Claim (d). 

  Let $g$ be a modular unit on $X_0(N)$. Without loss of generality,
  we may assume that the leading coefficient of $g$ is $1$. Since $g$
  is naturally also a modular unit on $X(N)$, by
  \cite{Kubert,Kubert-Lang-IV}, $g$ is a product of Siegel
  functions and, in the case $N$ is even, also functions of the forms
  $q^{-d/48}\prod_n(1+q^{d(n+1/2)})$ and $q^{d/24}\prod_n(1+q^{dn})$.
  (We refer the reader to \cite{Kubert-Lang-book} for the definition
  of Siegel functions.)
  Hence all its Fourier coefficients are algebraic integers. Also,
  by Claim (c), there exists a positive integer $\ell$ such that
  $g^\ell\in\sU_0$ up to a scalar, say,
  \begin{equation} \label{equation: g}
  g^\ell=c\prod_{m|N}\prod_{k=0}^{\phi(h(m))-1}\eta_{m,k}^{e_{m,k}}.
  \end{equation}

  Now for convenience, for a Puiseux series $f$
  in $q$, we let
  $$
  S(f)=\frac{\text{second nonzero term of }f}
  {\text{leading term of }f}.
  $$
  Comparing the $\ell$th roots of the two sides of \eqref{equation:
    g}, we find that
  $$
  S(g)=-\left(\sum_{k=0}^{\phi(n)-1}\frac{e_{1,k}}\ell\zeta_n^k\right)
  q, \quad \zeta_n=e^{2\pi i/n}.
  $$
  Since $S(g)$ is an algebraic integer and
  $1,\zeta_n,\ldots,\zeta_n^{\phi(n)-1}$ form an integral basis for
  the ring of integers in $\Q(\zeta_n)$, we must have
  $e_{1,k}/\ell\in\Z$ for all $k$. By the same token, by considering
  $S(g\prod_k\eta_{1,k}^{-e_{1,k}/\ell})$, we deduce that
  $e_{m,k}/\ell\in\Z$ for all $k$ for the next divisor $m$ of $N$.
  Continuing this way, we conclude that $g\in\sU_0$ up to a scalar.
  This completes the proof of Theorem \ref{theorem: modular units 2}.
\end{proof}

\section{The two groups $\sC(N)(\Q)$ and $\sC_\Q(N)$}
We will prove Theorem \ref{theorem: equality} in this section.
Let $D$ be a cuspidal divisor of degree $0$ on $X_0(N)$ such that
$D^\sigma\sim D$ for all $\sigma\in\Gal(\overline\Q/\Q)$. (Here $\sim$
denotes the linear equivalence between divisors.) Our goal is to
construct a $\Q$-rational cuspidal divisor $D'$ such that $D\sim D'$.

Assume that $[D]$ has order $r$ in $J_0(N)$ and $f$ is a modular unit
such that $\div f=rD$. By Theorem \ref{theorem: modular units 2}, we have
\begin{equation} \label{equation: f}
f=\prod_{m|N}\prod_{k=0}^{\phi(h(m))-1}\eta_{m,k}^{e_{m,k}}
\end{equation}
for some integers $e_{m,k}$ satisfying the four conditions in Theorem
\ref{theorem: modular units}. We first describe how $\Gal(\Q(e^{2\pi
  i/n})/\Q)$ acts on $\div f$.

\begin{Lemma} For an integer $\ell$ with $(\ell,N)=1$, let
  $\sigma_\ell$ be the element of $\Gal(\Q(e^{2\pi i/n})/\Q)$ that maps
  $e^{2\pi i/n}$ to $e^{2\pi i\ell/n}$. We have
  $$
  (\div\eta_{m,k})^{\sigma_\ell}=\div\eta_{m,\ell k}.
  $$
\end{Lemma}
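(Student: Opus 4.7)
The approach is to compare the order of $\eta_{m,k}$ at a cusp $a/c$ with the order of $\eta_{m,\ell k}$ at the Galois-conjugate cusp. First I would invoke the standard description of the cusps of $X_0(N)$ and their Galois action: every cusp is represented by a pair $[a/c]$ with $c\mid N$ and $(a,c)=1$, two pairs $[a/c]$ and $[a'/c]$ representing the same cusp if and only if $a\equiv a'\pmod{(c,N/c)}$. The cusps at denominator $c$ are defined over $\Q(\zeta_{(c,N/c)})$, and since $N=n^2M$ with $M$ squarefree forces $(c,N/c)\mid n$ for every divisor $c$ of $N$, every cusp is defined over $\Q(\zeta_n)$. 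The element $\sigma_\ell$ acts on the set of cusps by $[a/c]\mapsto[\ell a/c]$.

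Next I would compute orders using Lemma \ref{lemma: eta mk}(b). The order of $\eta_{m,k}$ at $a/c$ is $cN/(24m(c')^2(c,N/c))$, where $c'=hc/\gcd(mha+kc,\,hc)$ with $h=h(m)$. Replacing $a$ by $\ell a$ and $k$ by $\ell k$ in that formula gives, for the order of $\eta_{m,\ell k}$ at $\ell a/c$, the analogous expression with denominator $c''=hc/\gcd(\ell(mha+kc),\,hc)$. Since every prime divisor of $hc$ divides $N$ and $(\ell,N)=1$, we have $(\ell,hc)=1$, so the $\ell$ drops out of the gcd and $c''=c'$. Hence
$$
\mathrm{ord}_{a/c}(\eta_{m,k})=\mathrm{ord}_{\ell a/c}(\eta_{m,\ell k}).
$$

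Finally, the Galois action on the divisor group is $(\div f)^{\sigma_\ell}=\sum_P\mathrm{ord}_P(f)\,\sigma_\ell(P)$. Applying this with $f=\eta_{m,k}$ and matching coefficients against $\div\eta_{m,\ell k}$ cusp by cusp using the identity displayed above together with $\sigma_\ell([a/c])=[\ell a/c]$ yields $(\div\eta_{m,k})^{\sigma_\ell}=\div\eta_{m,\ell k}$.

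The only real obstacle is pinning down the Galois action on the cusps; this is classical (going back to Ogg) but must be invoked cleanly, and one must verify the arithmetic point that $(c,N/c)\mid n$ so that everything takes place inside $\Q(\zeta_n)$. Once that is granted, the equality of orders is a short coprimality observation and the proof is essentially a bookkeeping exercise in Lemma \ref{lemma: eta mk}(b).
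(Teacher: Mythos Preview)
Your approach is essentially the same as the paper's: use Lemma~\ref{lemma: eta mk}(b) to compare the order of $\eta_{m,k}$ at a cusp with the order of $\eta_{m,\ell k}$ at its Galois conjugate, and observe that the coprimality of $\ell$ with $hc$ makes the two denominators $c'$ and $c''$ coincide.

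One point deserves care. You assert that $\sigma_\ell$ sends $[a/c]$ to $[\ell a/c]$, whereas the paper (citing Stevens) records the action as $a/c\mapsto a/\ell' c$ with $\ell\ell'\equiv 1\pmod N$, which is then identified with $[\ell' a/c]$. In general these differ by an inverse. The reason both formulas give the same cusp here is exactly the arithmetic fact you flagged: $(c,N/c)\mid n\mid 24$, and every unit modulo a divisor of $24$ is its own inverse, so $\ell\equiv\ell'\pmod{(c,N/c)}$ and $[\ell a/c]=[\ell' a/c]$. The paper makes this explicit at the outset (noting $\ell\equiv\ell'\pmod n$ and hence $\div\eta_{m,\ell k}=\div\eta_{m,\ell' k}$), and then carries out the order computation with $\ell'$ rather than $\ell$. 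Your computation with $\ell$ is fine, but you should state the $\ell\equiv\ell^{-1}$ observation explicitly so that the direction of the Galois action is unambiguous.
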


\begin{proof} Let $\ell'$ be an integer such that
  $\ell\ell'\equiv1\mod N$. We first remark that because $n|24$, we
  have $\ell\equiv\ell'\mod n$. Thus, $\div\eta_{m,\ell
    k}=\div\eta_{m,\ell'k}$ since the two functions differ only by a
  root of unity. We will prove the lemma in the form
  $$
  (\div\eta_{m,k})^{\sigma_\ell}=\div\eta_{m,\ell'k}.
  $$

  Let $a/c$ with $c|N$ be a cusp of $X_0(N)$. Recall that
  the action of $\sigma_\ell$ maps the cusp $a/c$ to the cusp
  $a/\ell'c$ (see,
  for instance, \cite[Theorem 1.3.1]{Stevens-book}). This cusp
  $a/\ell'c$ is equivalent to $\ell'a/c$ (see, for example,
  \cite[Proposition 2.2.3]{Cremona}). Thus, to prove the lemma, it
  suffices to show that the order of $\eta_{m,k}$ at $a/c$ is equal to
  that of $\eta_{m,\ell'k}$ at $\ell'a/c$. Now by Lemma \ref{lemma:
    eta mk}, the former is
  $$
  \frac{cN}{24m(c')^2(c,N/c)},
  $$
  while the latter is
  $$
  \frac{cN}{24m(c'')^2(c,N/c)},
  $$
  where $c'$ and $c''$ are the denominators in the reduced froms of
  $(mha+kc)/hc$ and $(mh\ell'a+\ell'kc)/hc$, respectively. Since
  $\ell'$ is relatively prime to $hc$, we have $c'=c''$. Then the
  lemma follows.
\end{proof}

In view of the lemma, we naturally define
$$
f^{\sigma_\ell}:=\prod_{m|N}\prod_{k=0}^{\phi(h(m))-1}
\eta_{m,\ell k}^{e_{m,k}}
$$
for $\ell$ with $(\ell,N)=1$ so that
$$
\div f^{\sigma_\ell}=(\div f)^{\sigma_\ell}=rD^{\sigma_\ell}.
$$
Our strategy of proving the theorem is as follows.
\begin{enumerate}
  \item[(a)] We first show (case by case) that $r|e_{m,k}$ for all
    $(m,k)$ with $0<k<\phi(h(m))$. This is achieved by using the
    assumption that $D^\sigma\sim D$, $\sigma\in\Gal(\Q(e^{2\pi
      i/n})/\Q)$, which implies that $f^\sigma/f$
    is the $r$th power of some modular unit on $X_0(N)$.
  \item[(b)] Set $e_{m,k}'=e_{m,k}/r$ for $(m,k)$ with
    $0<k<\phi(h(m))$. Using the assumption that $(f^\sigma/f)^{1/r}$ is a
    modular unit again, we deduce some congruence relations for
    $e_{m,k}'$ from Theorem \ref{theorem: modular units}.
  \item[(c)] For each $(m,k)$ with $0<k<\phi(h(m))$, construct a
    function $\wt\eta_{m,k}$ such that
    \begin{enumerate}
    \item[(i)] $\wt\eta_{m,k}$ is a product of $\eta(d\tau)$, $d|N$,
    \item[(ii)] the ratio $\eta_{m,k}/\wt\eta_{m,k}$ satisfies Conditions
      (a), (b), and (c) in Theorem \ref{theorem: modular units}.
    \end{enumerate}
  \item[(d)] Define
    $$
    g=\prod_{(m,k):0<k<\phi(h(m))}\left(\frac{\eta_{m,k}}
      {\wt\eta_{m,k}}\right)^{e_{m,k}'}.
    $$
    Show that $g$ satisfies Condition (d) of Theorem \ref{theorem:
      modular units} using the congruence relations among $e_{m,k}'$
    from Step (b) so that $g$ is a modular unit. (In some cases, we
    may need to modify $g$ a little bit.)
  \item[(e)] Let $D'=D-\div g$, which is equivalent to $D$. Now we have
    $$
    rD'=rD-r\div g=\div(f/g^r).
    $$
    By our construction of $g$, we find that $f/g^r$ is a product of
    $\eta(d\tau)$, $d|N$, and hence has a $\Q$-rational divisor. This
    proves the theorem.
\end{enumerate}

To construct $\wt\eta_{m,k}$, we shall use the following lemma. 

\begin{Lemma} \label{lemma: wt eta}
  Assume that $m$, $h$, and $k$ are positive integers such that
  $h^2m|N$ and $(k,h)=1$. Then the orders of the functions $s(\tau)$
  defined below at the cusps $\infty$ and $0$ are both integers.
  \begin{enumerate}
  \item[(a)] Assume that $3|h$. Set $h'=h/3$ and let
     $$
    s(\tau)=\frac{\eta(m\tau+k/h)\eta(3m\tau+k/h')^4}
    {\eta(m\tau+k/h')^4\eta(9m\tau+k/h')}.
    $$
  \item[(b)] Assume that $4|h$. Set $h'=h/4$ and let
    $$
    s(\tau)=\frac{\eta(m\tau+k/h)\eta(m\tau+k/h')\eta(4m\tau+k/h')^3
      \eta(16m\tau+k/h')}{\eta(2m\tau+k/h')^3\eta(8m\tau+k/h')^3}.
    $$
  \end{enumerate}
\end{Lemma}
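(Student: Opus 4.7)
The plan is to verify directly, by invoking only the formulas established in Lemma \ref{lemma: eta mk} and the $q$-expansion given in the introduction, that the order of $s(\tau)$ is an integer at both the cusps $\infty$ and $0$. No sophisticated ingredient is needed; what is required is to check that the exponents and coefficients $m'$ appearing in the two displayed products of $\eta$-factors have been chosen so that the standard cancellations occur.

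For the cusp $\infty$, the product expansion of $\eta(m'\tau+j/H)$ shows that its order at $\infty$ equals $m'/24$, independently of $j/H$. Thus one only needs to verify that the exponent-weighted sum of the $m'$-coefficients vanishes modulo $24$; in fact, in both cases (a) and (b) the sum is identically $0$ (for (a): $1\cdot m + 4\cdot 3m - 4\cdot m - 1\cdot 9m = 0$; for (b): $m + m + 3\cdot 4m + 16m - 3\cdot 2m - 3\cdot 8m = 0$).

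For the cusp $0$, apply Lemma \ref{lemma: eta mk}(b) with $a/c = 0/1$: the order of a factor $\eta(m'\tau+j/H)$ whose argument $j/H$ is in lowest terms equals $N/(24 m' H^2)$. The hypotheses $(k,h)=1$ together with $h'\mid h$ force $(k,h')=1$, so every fractional shift appearing in $s$ is already reduced and the formula applies uniformly. Summing the contributions and using $h=3h'$ in case (a), respectively $h=4h'$ in case (b), the totals collapse to $-N/(mh^2)$ and $0$, respectively; both are integers because $mh^2\mid N$.

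I expect no substantive obstacle: the argument is a direct arithmetic verification, and indeed the two expressions for $s(\tau)$ are engineered so that precisely these cancellations occur, mirroring the eta identities for $\eta(\tau+1/2)$ and $\eta(\tau+1/3)\eta(\tau+2/3)$ displayed earlier in the paper. The one point that warrants an explicit line is the reduction of the fractional arguments to lowest terms, which is exactly where the assumption $(k,h)=1$ is used; without it the formula for the order at $0$ would pick up extra factors of $(H,j)^2$ and the cancellations would no longer be integral in general.
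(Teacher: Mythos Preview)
Your proposal is correct and follows essentially the same approach as the paper: both compute the order at $\infty$ directly from the $q$-expansion (obtaining $0$ in each case) and the order at $0$ via Lemma~\ref{lemma: eta mk}(b), arriving at $-N/(mh^2)$ in case~(a) and $0$ in case~(b). Your explicit remark that $(k,h)=1$ forces $(k,h')=1$, so each shift $k/h'$ is already in lowest terms, is the one point the paper leaves implicit, and it is exactly what makes the uniform application of the order formula legitimate.
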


\begin{proof}
  The order at $\infty$ is clearly $0$. By Lemma \ref{lemma: eta mk},
  the order of the function in Part (a) at $0$ is
  $$
  \frac1{24}\left(\frac N{mh^2}+\frac{4N}{3m(h')^2}
    -\frac{4N}{m(h')^2}-\frac N{9m(h')^2}\right)
  =\frac N{24mh^2}(1+12-36-1)=-\frac N{mh^2},
  $$
  while that of the function in Part (b) is
  \begin{equation*}
    \begin{split}
     &\frac1{24}\left(\frac N{mh^2}+\frac N{m(h')^2}+\frac{3N}{4m(h')^2}
       +\frac N{16m(h')^2}-\frac{3N}{2m(h')^2}
       -\frac{3N}{8m(h')^2}\right) \\
       &\qquad=\frac N{24mh^2}(1+16+12+1-24-6)=0.
    \end{split}
  \end{equation*}
  The orders are indeed integers.
\end{proof}

We now describe our construction of $g$ case by case. For convenience,
all equalities among modular units stated below hold only up to
nonzero scalars. 
Note that the cases $n=1$ and $n=2$ are trivial since every cusp is
$\Q$-rational in these cases.

\subsection{Case $n=3$}
  Let $D$, $f$, and $\eta_{m,k}$ be given as above and
  $\sigma=\sigma_{-1}$ be the nontrivial element in $\Gal(\Q(e^{2\pi
    i/3})/\Q)$. As explained in the description of our strategy,
  we know that $f^\sigma/f$ is the $r$th power of a modular unit.
  Now we have
  $$
  f^\sigma/f=\prod_{m|M}\left(\frac{\eta(m\tau-1/3)}
    {\eta(m\tau+1/3)}\right)^{e_{m,1}}.
  $$
  (Note that $k=1$ occurs only when $m|M$.)
  Using \eqref{equation: eta relations}, we may write it as
  $$
  f^\sigma/f=\prod_{m|M}\left(
  \frac{\eta(3m\tau)^4}{\eta(m\tau+1/3)^2\eta(m\tau)\eta(9m\tau)}
  \right)^{e_{m,1}}.
  $$
  Since this is the $r$th power of some modular unit, by the uniqueness
  of product expression described in Theorem 
  \ref{theorem: modular units 2}, we must have $r|e_{m,k}$ for all
  $(m,k)$ with $k=1$. (Alternatively, we may follow the argument for
  Claim (c) in the proof of Theorem \ref{theorem: modular units 2} to
  show that $r|e_{m,1}$ for all $m$.) Set $e_{m,1}'=e_{m,1}/r$.
  Note that since $(f^\sigma/f)^{1/r}$ is a modular unit, we have
  \begin{equation} \label{equation: emk' mod 3}
  \sum_{m|M}e_{m,1}'\equiv 0\mod 3,
  \end{equation}
  by Theorem \ref{theorem: modular units 2}.

  For $m|M$, define
  \begin{equation*}
  \wt\eta_{m,1}(\tau)=\frac{\eta(m\tau)^4\eta(9m\tau)}
  {\eta(3m\tau)^4}
  \end{equation*}
  and set
  $$
  g(\tau)=\prod_{m|M}\left(\frac{\eta_{m,1}}
    {\widetilde\eta_{m,1}}\right)^{e_{m,1}'}.
  $$
  By Lemma \ref{lemma: wt eta}, the order of $g$ at $\infty$ and $0$
  are integers and hence $g$ satisfies Conditions (a), (b), and (c) in
  Theorem \ref{theorem: modular units}. Also, by \eqref{equation: emk'
    mod 3}, Condition (d) is fulfilled. Hence $g$ is a modular unit on
  $X_0(N)$, and $D'=D-\div g$ is a $\Q$-rational cuspidal divisor
  equivalent to $D$.

\subsection{Case $n=6$} Let $\sigma$ be the nontrivial element in
$\Gal(\Q(e^{2\pi i/3})/\Q)$. As in the case $n=3$, we can show that
$r|e_{m,1}$ for all $m|N$ with $\phi(h(m))\neq 1$ (i.e., $h(m)=3$ or
$h(m)=6$). Set $e_{m,1}'=e_{m,1}/r$ for such $m$. The fact that
$(f^\sigma/f)^{1/r}$ is a modular unit implies that
$$
\prod_{m|N,h(m)=3,6}\left(\frac{\eta_{m,-1}}{\eta_{m,1}}\right)^{e_{m,1}'}
$$
is a modular unit, which in turn shows that
\begin{equation} \label{equation: emk' mod 6}
-2\sum_{m|N,h(m)=6}e_{m,1}'+2\sum_{m|N,h(m)=3}e_{m,1}'\equiv 0\mod 6,
\end{equation}
by Condition (d) of Theorem \ref{theorem: modular units}.

For $m|N$ with $h(m)=6$, define
$$
\wt\eta_{m,1}(\tau)=\frac{\eta(m\tau+1/2)^4\eta(9m\tau+1/2)}
{\eta(3m\tau+1/2)^4},
$$
and for $m|N$ with $h(m)=3$, define
$$
\wt\eta_{m,1}(\tau)=\frac{\eta(m\tau)^4\eta(9m\tau)}
{\eta(3m\tau)^4}.
$$
Note that by \eqref{equation: eta relations}, $\eta(m\tau+1/2)$
can be written as a product of $\eta(d\tau)$, $d|N$. Set
$$
g=\prod_{m|M,h(m)=3,6}
\left(\frac{\eta_{m,1}}{\wt\eta_{m,1}}\right)^{e_{m,1}'}
$$
By Lemma \ref{lemma: eta mk}, $g$ satisfies Conditions (a), (b), and
(c) of Theorem \ref{theorem: modular units}. Also, the left-hand side
of \eqref{equation: mod 2} for the function $g$ is
$$
-2\sum_{m|N:h(m)=6}e_{m,1}'+2\sum_{m|N:h(m)=3}e_{m,1}',
$$
which by \eqref{equation: emk' mod 6}, is congruent to $0$ modulo $6$.
Hence Condition (d) is also satisfied, and $g$ is a modular unit. This
proves the theorem for the case $n=6$.

\subsection{Case $n=4$} Let $\sigma$ be the nontrivial element of
$\mathrm{Gal}(\Q(\sqrt{-1})/\Q)$. Again, we omit the proof of
$r|e_{m,1}$ for all $m|M$. (Note that $h(m)=4$ if and only if
$m|M$.) Set $e_{m,1}'=e_{m,1}/r$ for
those $m$. The fact that
$$
\prod_{m|M}\left(\frac{\eta(m\tau-1/4)}{\eta(m\tau+1/4)}
\right)^{e_{m,1}'}
=\left(\frac{f^\sigma}f\right)^{1/r}
$$
is a modular unit implies that
$$
\sum_{m|M}e_{m,1}'\equiv 0\mod 2,
$$
by Condition (d) in Theorem \ref{theorem: modular units}.

Define
$$
\widetilde\eta_{m,1}(\tau)=\frac
{\eta(2m\tau)^3\eta(8m\tau)^3}{\eta(m\tau)\eta(4m\tau)^3
  \eta(16m\tau)},
$$
and
$$
g=\prod_{m|M}\left(\frac{\eta_{m,1}}{\widetilde\eta_{m,1}}
\right)^{e_{m,1}'}.
$$
By Lemma \ref{lemma: eta mk}, $g$ satisfies Conditions (a),
(b), (c) in Theorem \ref{theorem: modular units}. Moreover,
if
$$
\sum_{m|M}e_{m,1}'\equiv 0\mod 4,
$$
then Condition (d) is also fulfilled (note that for an individual
$\eta_{m,1}/\wt\eta_{m,1}$, the sum of the left-hand side of
\eqref{equation: mod 2} is congruent to $1$ modulo $4$) and hence $g$ is a modular unit
on $X_0(N)$. If
$$
\sum_{m|M}e_{m,1}'\equiv 2\mod 4
$$
instead, we replace $g$ by
$$
g=\frac{\eta(\tau)^2\eta(4\tau)^7}{\eta(2\tau)^7\eta(8\tau)^2}
\prod_{m|M}\left(\frac{\eta_{m,1}}{\widetilde\eta_{m,1}}
\right)^{e_{m,1}'},
$$
which is a modular unit under the assumption $\sum_{m|M}e_{m,1}'\equiv
2\mod 4$. Either way, we find that $D'=D-\div g$ is a $\Q$-rational
cuspidal divisor linearly equivalent to $D$. 

\subsection{Case $n=8$}
For $a\in\{\pm1,\pm3\}$, let $\sigma_a$ be the element of
$G=\Gal(\Q(e^{2\pi i/8})/\Q)$ that maps $e^{2\pi i/8}$ to $e^{2\pi
  ia/8}$.
We have
$$
f^{\sigma_a}/f=\prod_{m|N,h(m)=8}\prod_{k=1}^3
\left(\frac{\eta_{m,ak}}{\eta_{m,k}}\right)^{e_{m,k}}
\times\prod_{m|N,h(m)=4}\left(
  \frac{\eta_{m,a}}{\eta_{m,1}}\right)^{e_{m,1}}
$$
for $a\in\{\pm1,\pm3\}$. As $f^{\sigma_a}/f$ is the $r$th power of some
modular unit, by considering $a=3$ and using \eqref{equation: eta
  relations}, we find that $r|e_{m,2}$ for $m$ with $h(m)=8$ and
$r|e_{m,1}$ for $m$ with $h(m)=4$. By considering $a=-3$ instead, we
conclude also that $r|e_{m,1},e_{m,3}$ for $m$ with $h(m)=8$. Set
$e_{m,k}'=e_{m,k}/r$ for those $(m,k)$. The fact that
$$
\prod_{m|N,h(m)=8}\prod_{k=1}^3
\left(\frac{\eta_{m,-k}}{\eta_{m,k}}\right)^{e_{m,k}'}
\times\prod_{m|N,h(m)=4}\left(
  \frac{\eta_{m,-1}}{\eta_{m,1}}\right)^{e_{m,1}'}
=\left(\frac{f^{\sigma_{-1}}}f\right)^{1/r}
$$
is a modular unit implies that
\begin{equation} \label{equation: n=8}
  \sum_{m|N,h(m)=8}(6e_{m,1}'+4e_{m,2}'+2e_{m,3}')+4
  \sum_{m|N,h(m)=4}e_{m,1}'\equiv0\mod 8,
\end{equation}
by Condition (d) of Theorem \ref{theorem: modular units}. Define
$\wt\eta_{m,k}$ by
$$
\wt\eta_{m,k}=
  \frac{\eta(2m\tau+1/2)^3\eta(8m\tau+1/2)^3}
  {\eta(m\tau+1/2)\eta(4m\tau+1/2)^3\eta(16m\tau+1/2)}
$$
for $(m,k)$ with $h(m)=8$ and $k=1,3$, and by
$$
\wt\eta_{m,k}=
  \frac{\eta(2m\tau)^3\eta(8m\tau)^3}
  {\eta(m\tau)\eta(4m\tau)^3\eta(16m\tau)}
$$
for $(m,k)$ with $(h(m),k)=(4,1)$ or $(h(m),k)=(8,2)$.
Then set
$$
g_0=\prod_{m|N,h(m)=8}\prod_{k=1}^3\left(\frac{\eta_{m,k}}
  {\wt\eta_{m,k}}\right)^{e_{m,k}'}\times
\prod_{m|N,h(m)=4}\left(\frac{\eta_{m,1}}{\wt\eta_{m,1}}
\right)^{e_{m,1}'}.
$$
By Lemma \ref{lemma: wt eta}, this function $g_0$ satisfies Conditions
(a), (b), and (c) in Theorem
\ref{theorem: modular units}. Furthermore, the left-hand side of
\eqref{equation: mod 2} for $g_0$ is congruent to
$$
\sum_{m|N,h(m)=8}(5e_{m,1}'+6e_{m,2}'+7e_{m,3})
+2\sum_{m|N,h(m)=4}'e_{m,1}'
$$
modulo $8$. By \eqref{equation: n=8}, this sum is a
multiple of $4$. We set
$$
g=\begin{cases}
  g_0, &\text{if the sum is divisible by }8, \\
  g_0\eta(\tau)^2\eta(4\tau)^7/\eta(2\tau)^7\eta(8\tau)^2,
  &\text{if the sum is congruent to }4\text{ modulo }8.
  \end{cases}
$$
Then this is a modular unit on $X_0(N)$, as we are required to construct.

\subsection{Case $n=12$ or $n=24$} The idea of proof is similar to
previous cases, so we will only sketch the argument. Let $n=12$ or
$n=24$. By using the property that $f^\sigma/f$ is the $r$th power of
a modular unit for all $\sigma\in\Gal(\Q(e^{2\pi i/n})/\Q)$, we can
show $r|e_{m,k}$ for all $(m,k)$ with $k\neq0$. Set
$e_{m,k}'=e_{m,k}/r$ for those $(m,k)$. Then the fact that
$$
\left(\frac{f^{\sigma_{-1}}}f\right)^{1/r}
=\prod_{m|N}\prod_{k=1}^{\phi(h(m))-1}\left(\frac{\eta_{m,-k}}
  {\eta_{m,k}}\right)^{e_{m,k}'}
$$
is a modular unit implies that
\begin{equation} \label{equation: n=12, 24}
2\sum_{h|n}\frac nh\sum_{(m,k):h(m)=h,k\neq 0}ke_{m,k}'\equiv 0\mod n
\end{equation}
by Condition (d) of Theorem \ref{theorem: modular units}. Now Lemma
\ref{lemma: wt eta} provides two procedures to find a function $\iota$
such that $\iota$ is a product of $\eta(dm\tau+k/(h/3))$ or a
product of $\eta(dm\tau+k/(h/4))$, depending on whether $h$ is
divisible by $3$ or $4$, and $\eta_{m,k}/\iota$ has weight
$0$, and its order at the cusps $\infty$ and $0$ are integers.
Using these two procedures or the combination of the two procedures,
we can construct $\wt\eta_{m,k}$ that is a product of $\eta(d\tau)$,
$d|N$, or a product of $\eta(d\tau+1/2)$, $d|N/4$, in the case
$h(m)=6,8,24$ such that it has
weight $0$ and its order at $\infty$ and $0$ are integers. For instance, for $\eta_{1,1}=\eta(\tau+1/24)$, we apply
Part (a) of Lemma \ref{lemma: wt eta} and find that the function
$\iota$ can be chosen to be
$$
\frac{\eta(\tau+1/24)\eta(3\tau+1/8)^4}
{\eta(\tau+1/8)^4\eta(9\tau+1/8)}.
$$
Then for each $\eta(d\tau+1/8)$, $d|9$, we apply Part (b) of the same
lemma and find that
$$
\frac{\eta(d\tau+1/8)\eta(d\tau+1/2)\eta(4d\tau+1/2)^3
  \eta(16d\tau+1/2)}{\eta(2d\tau+1/2)^3\eta(8d\tau+1/2)^3}
$$
has weight $0$ and integer orders at $\infty$ and $0$. From these two
procedures, we obtain a function $\wt\eta_{1,1}$ of the form
$\prod_{d|144}\eta(d\tau+1/2)$ such that $\eta_{1,1}/\wt\eta_{1,1}$
satisfies Conditions (a), (b), and (c) of Theorem \ref{theorem:
  modular units}.

Let $\wt\eta_{m,k}$ be the eta-products constructed above and consider
$$
g_0=\prod_{m|N}\prod_{k=1}^{\phi(h(m))-1}\left(
  \frac{\eta_{m,k}}{\wt\eta_{m,k}}\right)^{e_{m,k}'}.
$$
By construction, $g_0$ satisfies Conditions (a), (b), and (c) in
Theorem \ref{theorem: modular units}. Furthermore, the left-hand side
of \eqref{equation: mod 2} for $g_0$ is
$$
\sum_{h|n}\frac nh\sum_{(m,k):h(m)=h,k\neq 0}ke_{m,k}'
-\frac n2\sum_{h=6,8,24}\sum_{(m,k):h(m)=h,k\neq 0}e_{m,k}'
$$
modulo $n$. (Note that the second sum is coming from $\wt\eta_{m,k}$
that are products of the form $\eta(dm\tau+1/2)$ in the case
$h(m)=6,8,24$.) By \eqref{equation: n=12, 24}, the sum above is
congruent to $0$ modulo $n/2$. Set
$$
g=\begin{cases}
  g_0, &\text{if the sum is divisible by }n, \\
  g_0\eta(\tau)^2\eta(4\tau)^7/\eta(2\tau)^7\eta(8\tau)^2,
  &\text{if the sum is congruent to }n/2\text{ modulo }n.
  \end{cases}
$$
Then this function $g$ is a modular unit on $X_0(N)$. The rest of
proof is the same as before and is omitted.

\section*{Acknowledgement}
The first author was partially supported by the National
ral Science Foundation of China (11801424), the Fundamental Research
Funds for the Central Universities (Project No. 2042018kf0027, Grant
1301–413000053) and a start- up research grant (1301–413100048) of the
Wuhan University. The second author was partially supported by Grant
106-2115-M-002-009-MY3 of the Ministry of Science and Technology,
Taiwan (R.O.C.).

\end{document}